\def\b{\beta}
\def\z{\zeta}
\def\ng{\lambda}
\def\fo{{f_o}}
\def\N{{\mathbb N}}
\def\R{{\mathbb R}}
\def\C{{\mathbb C}}
\def\Q{{ Q}}
\def\Z{{\mathbb {Z}}}
\def\b{{\mathbf {b}}}
\def\x{{\mathbf {x}}}
\def\z{{\mathbf {z}}}
 \def\snc{{\rm sinc}}
\newcommand\what[1]{\widehat{#1}}
\newcommand\leng[2]{{\rm len}_{\phantom{}_{#2}}({#1})}
 \newcommand\pr{\mathop{{\text{\Huge{$\times$}}}}}
  \newcommand\crossp[2]{\mathop{\pr_{#1}^{#2}}}
\def\Ld{{L^2(\R)}} %L^2 di R
\def\Lu{{L^1(\R)}} %L^2 di R
 \def\Lp#1#2{{L_{#1}^{#2}}} %L 2 periodico 
  \def\ldCN {{{\ell}^2 (\Z;{\C}^N)}} %Successioni
\def \Tfi{{T^{}_{\Phi,\hskip 0.05em t_o}}}
\def\Tfia{{T^{*}_{\Phi,\hskip 0.05em t_o}}}
\def\Tfistar{{T^{}_{\Phi^*,\hskip 0.05em t_o}}}
\def\Tfistara{{T^{*}_{\Phi^*,\hskip 0.05em t_o}}}
\def\Efi{{E_{\Phi,\hskip 0.05em t_o}}}
\def\EfiL{{E_{\Phi_L,\hskip 0.05em t_o}}}
\def\Efistar{{E_{\Phi^*,\hskip 0.05em t_o}}}
\def\BL{{B_\omega}}
\def\Gfi{{G_{\Phi,\hskip 0.05em t_o}}}
\def\Gfia{{G_{\Phi,\Phi^*,\hskip 0.05em t_o}}}
   \newcommand\Gbfia{{G}^{}_{\Phi,\Phi^*,t_o}}
\def\Jfi{{J^{}_{\Phi,\hskip 0.05em t_o}}}
\def\Jfidual{{J^{}_{\Phi^{*},\hskip 0.05em t_o}}}
 \def\Jfistar{{J^{*}_{\Phi,\hskip 0.05em t_o}}}
   \newcommand\Apen[1]{{ {[\,\Jfi\,]}_{#1}}}
 \newcommand\Jbfidual{\mathbb{J}^{}_{\Phi^*,t_o}}
 \newcommand\Jbfistar{\mathbb{J}^{*}_{\Phi,t_o}} 
 \newcommand\Jbfi{\mathbb{J}_{\Phi,t_o}}
 \newcommand\cI {\mathop{ \mathcal I}}
  \newcommand\PI {{ {P}}_{\cI,h} }
\def\squareforqed{\hbox{\rlap{$\sqcap$}$\sqcup$}}
\def\qed{\ifmmode\squareforqed\else{\unskip\nobreak\hfil
\penalty50\hskip1em\null\nobreak\hfil\squareforqed
\parfillskip=0pt\finalhyphendemerits=0\endgraf}\fi} 
 \newtheorem{theorem}{Theorem}[section] 
\newtheorem{corollary}[theorem]{Corollary}
\newtheorem{lemma}[theorem]{Lemma}
\newtheorem{defn}[theorem]{Definition}
\newtheorem{proposition}[theorem]{Proposition}
\theoremstyle{remark}
\numberwithin{equation}{section}
\begin{document}
%DA  QUI 
\title[Frames for band limited functions]
{Recovery of Missing Samples in Oversampling Formulas\\for Band Limited  Functions}

\subjclass[2000]{} 

\keywords{frame, Riesz basis, shift-invariant space, sampling formulas, band-limited functions.}

 \author[Vincenza del Prete]
 {Vincenza Del Prete}

  \address{Dipartimento di Matematica\\
 Universit\`a di Genova, via Dodecaneso 35, 16146 Genova \\ Italia}

\begin{abstract}
In a previous paper  we  constructed  frames and oversampling formulas for band-limited functions, in the framework of  the theory of shift-invariant spaces. 
In this article we study the  problem of recovering  missing samples.
\par
We find  a sufficient  condition for the  recovery of   a  finite set of missing samples. The condition is expressed as a linear 
independence of the components of a vector $W$ over the space of trigonometric 
polynomials determined by the frequencies of the missing samples. We apply the theory to the derivative sampling of any order and we illustrate our results with a numerical experiment. 
 \end{abstract}

\maketitle
\setcounter{section}{0}
\section{Introduction} \label{s:Introduction}

  A band-limited signal is a function which belongs to the  space $\BL$  of functions in $\Ld$ whose Fourier transforms have support in $[-\omega,\omega]$.  Functions in  this   space can be represented  by  their Whittaker-Kotelnikov-Shannon series, which    is  the expansion   in terms of the  orthonormal basis of translates of the {\it sinc} function. The  coefficients
   of the expansion   are the  samples of the function  at  a uniform grid on $\R$,  with  ``density" $ {\omega}/{\pi}$ ({\it Nyquist} density). The sampling theory for band-limited functions stems from this  series representation and has been extended to   more general  expansions, as Riesz bases and frames, formed by the translates of one or more functions.  \par 
    Let $\Phi$ be a finite subset  of  $\BL$ and fix   a positive real number $t_o$.  If  the set of translates $\Efi=\{ \tau_{kt_o}\ \varphi,\,  \varphi\in \Phi,\,  k\in \Z \}$  is a frame for $\BL$ and the set  $\Phi^*$ of dual functions is known, then any    function   can be  reconstructed via  the syntesis formula (\ref{expansion}). The  elements of $\Phi$ are called {\it generators}.
 The coefficients of the expansion   are    samples of the convolution  of the function with the elements of   the dual family
   $\Efistar.$\par
  If the family is a Riesz basis,  the loss of even one sample prevents the reconstruction, since the elements of the family $\Efi$ are linearly  independent.  On  the contrary, if    the family $\Efi$  is  overcomplete,     recovery of a finite set of samples is possible   under suitable assumptions, because of the redundancy of the system. \par
 This paper deals with the problem of recovering missing samples of band-limited functions. We shall use the frame representation formulas constructed in \cite{DP} in the framework of the theory of frames for shift-invariant spaces. \par We recall that   $\BL$    is $t_o$-\emph{shift-invariant}   for any $t_o$, i.e. is invariant under all  translations  $\tau_{k t_o}, k \in \Z$, by integer multiples of   $t_o$.    A finite subset 
$\Phi $ of a $t_o$-shift-invariant space $S$     is called a set of generators  if $S$   is the closure of the space generated by the  family 
$\Efi.$ \
 Shift-invariant spaces   can have different sets of generators;
the smallest number of generators is called the length of the space. \par   The   structure of   shift-invariant spaces was first investigated  by
C. de Boor, R. DeVore   \cite{BDR}; successively    A. Ron and Z. Shen    introduced  their   Gramian analysis  and  characterized  sets of generators whose translates form    Bessel sequences, frames and Riesz bases  \cite{RS}.
 Their conditions are expressed in terms of the eigenvalues of the Gramian matrix.  More recently, building on their results, for the space $\BL$ we    have obtained  more explicit conditions   and  explicit formulas for the Fourier transforms of the dual generators     \cite{DP}; the  same  paper contains also   multi-channel   oversampling formulas for band-limited signals  and an application  to the    first and second derivative  oversampling formulas.  These are a generalization to frames   of the   classical   derivative sampling formulas, where  the family $\Efi$ is a Riesz basis (see \cite{Hi}). The coefficients of the  expansion are the  values of the function and  its derivatives at the sample points.  \par
In this   paper   we assume that $\Efi$ is a frame  for $\BL$  and we find  sufficient conditions for the recovery of a finite set of missing samples. This problem   has been investigated by  P.J.S.G Ferreira in  \cite{F}, where it is shown   that,  in the case of  a generalized Kramer sampling,   under suitable oversampling assumptions,   any finite set of  samples can be recovered from the others. Successively    D.M.S. Santos and Ferreira \cite{SF}   considered   the case of   a particular  two-channel  derivative  oversampling formula \cite{SF}.  The work of Santos and Ferreira has been generalized to arbitrary two-channels by Y. M. Hong and K. H. Kwon \cite{KK}.  \par
In this paper we  consider general frames $\Efi$ of $\BL$ where the number of elements of $\Phi$ is minimal, i.e. equal to the length of $\BL$ as $t_o$-shift invariant space, and   find sufficient conditions  for the recovery of  any finite subset of  missing samples. \par
 The paper is organized as follows. In Section 2  we   introduce the pre-Gramian and Gramian matrices  and recall
some  results, proved in \cite{DP},   that characterize sets of generators whose translates form   frames and Riesz bases, in terms of    the Fourier transforms of the generators.\par  
In  Section 3, we fix  the parameter $t_o$ and  we consider a family $\Phi=\{\varphi_1,\varphi_2 , \ldots   \varphi_N\}$
 of  $N$  generators, where $N$ is the length of ${\BL}$  as  a  $t_o$-shift-invariant space. If the family $\Efi$ is a frame and not a Riesz basis, we
 find   sufficient conditions     to reconstruct any finite set of missing samples (see Theorems \ref{t:recontructK} and \ref{t:recontructI}). This result makes use of    the particular  structure of the mixed-Gramian matrix    $\Gfia$ associated to  $\Phi$ and  to the dual generators $\Phi^*$.  Roughly speaking, the interval $[0, {2\pi}/{t_o}]$ is the union
 of two sets,  where $\Gfia$  is either the $N\times N$ identity  matrix    or $I-P_W$, where $P_W$ is the orthogonal  projection on a vector $W$ in $C^N,$ which  is the  cross product of $N-1$ translates of the Fourier  transform of the generators $\Phi$ (see Propositions \ref{p:gfifistarK} and \ref{p:gfifistar}). The recovery condition is expressed  as a linear independence of the components of the vector $W$ over the space of trigonometric polynomials  determined by the frequencies of the missing samples.  \\
  In  Section 4 we find     families  of  derivative frames of any order and     apply the results of   Section 3 to them, showing that it is possible  to recover any finite set of missing samples.     
 \section{Preliminaries}
\label{s: Preliminaries}
In this section  we establish notation and we  collect some  results  on frames for shift-invariant spaces. The Fourier transform of a  function  $f$ in 
  $  \Lu $ is
 $$ {\widehat f}(\xi)= \frac{1} {\sqrt{2\pi}}  \int f(t)\,e^{-it\xi}dt.$$     
 The convolution of two functions $f$ and $g$ is
 $ f\ast g (x)=\int f(x-y)g(y)\,dy,$ 
  so that $\widehat {f\ast g} = \sqrt{2\pi}  {\widehat f}\  {\widehat g}.$ 
Let $h$ be a positive real number; $    \Lp{h}{p}$ is the space of $h$-periodic functions on $\R$ such that $$\|f\|_{\Lp{h}{p}}=\Bigl(\frac{1}{h}\int_0^h|f(x)|^p dx\Bigr)^{1/p}<\infty.$$\par
In this paper    vectors in $\C^N$ are  to  be considered as column-vectors; however, to save space,   we shall write $x=(x_1,x_2, \dots, x_N)$ to denote the  column-vector whose components are $x_1, \dots,x_N$. 
 With the symbol $ \ldCN $ we shall denote the  space of square summable $\C^N$-valued sequences $c=(c(n))_{\Z}$. 
 \par\noindent      Let $H$ be a closed subspace of $\Ld.$   Given a subset $\Phi=\{ \varphi_j ,j=1,\dots, N\} $ of $H$ and a positive number $t_o$,  denote by $ \Efi$ the set    $$ \Efi =\{\tau_{nt_o}\varphi_j , \ n\in \Z, \ j=1,\dots ,N\};$$
  here   $\tau_a f(x)=f(x+a).$  
 The family $ \Efi $ is   a frame for     $H$   if the operator $\Tfi: \ldCN \rightarrow  H$ defined by
$\Tfi c=\sum_{j=1}^N \sum_{n\in \Z} c_{j}(n) \tau_{n t_o}\varphi_j$
is     continuous,  surjective and ${\rm{ran}}(\Tfi)$ is  closed. It is well known that     $ \Efi
$ is a   frame  for $H$  if  and  only if there exist two constants $0<A\le B$ such that\begin{equation}\nonumber
A\|f\|^2\leq \sum_{j=1}^{N}\sum_{n\in \Z}  |\langle f,  \tau_{n t_o}\varphi_j \rangle  |^2
\leq B \|f\|^2 
\hskip.6truecm \forall f\in  H.\end{equation} 
  The constants $A$ and $B$  are called frame bounds.
 Denote  by   $\Tfia: H \rightarrow \ldCN$ the    adjoint of $\Tfi.$           
The operator  $\Tfi\Tfia:H\rightarrow H$  is called {\it frame operator.} 
Denote  by  $\Phi^*$ the family  $\{\varphi_j^*, j=1,\dots ,N\},$   where
\begin{equation}\nonumber
 \varphi_j^*=(\Tfi\Tfia)^{-1}\varphi_j  \hskip1truecm     1\le j\le N.
 \end{equation}
If $\Efi$ is a frame for $H$ then    $\Efistar $ is also a frame,   called 
 the  {\it dual frame}, and  
$\Tfi\Tfistara=\Tfistar\Tfia =I.$
 Explicitly
  \begin{eqnarray}\label{expansion}
f= \sum_{j=1}^{N} \sum_{n\in \Z}  \langle f,  \tau_{n t_o}\varphi_j^* \rangle \tau_{n t_o}\varphi_j= \sum_{j=1}^{N} \sum_{n\in \Z}  \langle f,  \tau_{n t_o}\varphi_j \rangle \tau_{n t_o}\varphi_j^* \end{eqnarray}  
$  \forall f\in H.$
Note that 
\begin{equation}\label{innersamples}
\langle f,\tau_{nt_o} g\rangle=f\ast \tilde{g}(-nt_o) \qquad \forall n\in\Z
\end{equation}
where $\tilde{g}$ denotes the function $\tilde{g}=\overline{g}(-t)$. Thus the coefficients of the expansion are the samples  of the function $f\ast\tilde{g}$ in $-nt_o$.
  The elements of  $\Phi$ are called {\it generators} and the elements of $\Phi^*$ {\it dual generators.} 
 If the family    $ \Efi
$ is a frame for $H$ and the operator $\Tfi$ is injective, then $ \Efi
 $ is called a  {\it  Riesz basis}. 
\par
 In what follows    $t_o$ is a positive parameter.  To simplify notation, throughout the paper we shall set  $$h=\frac{2\pi}{t_o}.$$
\par A subspace $S$ of $\Ld$   is $t_o$-shift-invariant if it is invariant under all  translations by  a multiple of $t_o.$  
The following bracket product  plays an important role in  Ron and Shen's analysis of shift-invariant spaces. For  $f$ and $g \in\Ld,$  define 
\begin{equation}\nonumber%\label{braket}
[f,g]=h\sum_{j\in \Z} f(\cdot+jh ) \overline{g}(\cdot+jh).
\end{equation}
Note that $[f,g]$ is in $\Lp{h}{1}$ and $\|[f,f]\|_{\Lp{h}{1}}=\|f\|_2^2.$ 	 The  Fourier
coefficients of $ [\hat{f},\hat{g}]$ are given by
\begin{align}\nonumber
[\hat{f},\hat{g}]{\,}^{\widehat {\,}} (n)&=\int_0^{h}
\sum_{j }\tau_{jh}(\hat{f}\, \overline{\hat{g}})(x) e^{-2\pi i n \frac{x}{h}}dx=
\langle f, \tau_{n t_o}  g\rangle\\ &=f\ast \tilde{g}(-nt_o)\quad  \qquad \forall n\in \Z.\label{braketcoeff} 
\end{align}
If $S$ is a $t_o$-shift-invariant    space and   there exists     a finite family  $\Phi$ such that   $S $ is the closed linear span of $\Efi$, then we say that $S$ is finitely generated. Riesz bases for   finitely generated 
 shift-invariant spaces have been studied by various authors. In \cite{BDR} the authors  give a characterization of such bases.     A   characterization of frames and tight frames    also for countable sets $\Phi$ has been given by Ron and Shen in \cite{RS}.   The principal notions of their theory are the {\it    pre-Gramian}, the {\it Gramian} and the {\it dual Gramian} matrices.
The {\it pre-Gramian} $\Jfi$ is the $h$-periodic function mapping $\R$ to the space of $\infty\times N$-matrices defined on $[0,h]$  by
\begin{equation}\label{pregramian}
\bigl(\Jfi \bigr)_{j\ell}(x)=\sqrt{h}\  \what{\varphi_\ell}(x+jh),\hskip1truecm j\in \Z, \,  \ell=1,\dots,N.
\end{equation}
The pre-Gramian $\Jfi$ should not be confused with the  matrix-valued function  whose entries  are $\sqrt{h}\, \what{\varphi_\ell}(x+jh),$ for all $x\in\R,$ which is not periodic.  
  Denote by $\Jfistar$ the adjoint of $\Jfi.$ 
 The   $N\times N$ Gramian  matrix  $\Gfia=\Jfistar \Jfidual$ plays a crucial role in the recovery of   missing samples; its elements 
 are the $h$-periodic functions  
  \begin{equation}\label{gramianstar}
\bigl(\Gfia \bigr)_{j\ell}=[\what{\varphi^*_{\ell}}  ,\what{\varphi_j}].
\end{equation} 
    We  set $$\ell=\left [ \frac{\omega}{h} \right ]+1,$$  where  $[a]$ denotes the greatest integer less than $a.$ 
In \cite[Corollary 2.3]{DP} we found    the length of $\BL$,  as  $t_o$-shift-invariant space:   \begin{equation}\label{lenBL}
 \leng{\BL}{t_o}=\begin{cases}
 2\ell   &   {\rm {if}}\ \frac{\omega}{\ell}\le h<\frac{\omega}{\ell-\frac{1}{2}}, \\
 2\ell-1   & {\rm {if}} \   \frac{\omega}{\ell-\frac{1}{2}}\le h< \frac{\omega}{\ell-1}.\end{cases}
 \end{equation} 
We also  gave   necessary and sufficient conditions for $\Efi$ to be a Riesz basis or a frame for $\BL$ \cite[Theorems~3.6, 3.7]{DP}.  The result is  based on  the analysis  of the  structure of the matrix $\Jfi$;   we  restate it below in Theorems \ref{maingen} and \ref{maingendue} for the sake of the reader. \\
Strictly speaking, the pre-Gramian $\Jfi$ is an infinite matrix. However,   in  \cite[Lemma 3.3]{DP} it has been shown that if  $\Phi$ is a set of generators of  $\BL$, then 
all but a finite number of the rows of $J_{\Phi,to}$ vanish; hence we may identify it with a finite matrix.  Indeed, consider separately the two cases $  \omega/\ell \le h<  \omega/(\ell- {1}/{2})$
and $ {\omega}/(\ell- {1}/{2})\le h<  {\omega}/({\ell-1}).$\par    Assume first that $ {\omega}/{\ell}\le h<  {\omega}/(\ell- {1}/{2});$ then  $0\le -\omega+\ell h<\omega-(\ell-1)h<h$.  
 We denote by
$I_{-},I,I_{+}$ the intervals defined by
  \begin{equation}\nonumber%\label{I}
I_{-}=(0,-\omega+\ell h),\quad  I= (-\omega+\ell h,\omega-(\ell-1)h), \quad  I_+=(\omega-(\ell-1)h,h). 
\end{equation} 
 By  
 (\ref{lenBL})   $\leng{\BL}{t_o}=2\ell$;  if    $\Phi=\{\varphi_j:1\le j\le 2\ell\}$ is  a subset of $\BL$  of cardinality $2\ell$, by   \cite[Lemma 3.3]{DP} 
 all the rows of the matrix $\Jfi $ vanish, except possibly the rows $\big(\tau_{j h}\what{\varphi}_1,\tau_{j h}\what{\varphi}_2,\dots,\tau_{j h}\what{\varphi}_{2\ell}\big)$, $-\ell\le j\le\ell-1$. 
Thus we identify the infinite matrices $\Jfi$,   $\Jfidual$,  their adjoints and the matrices $\Gfi$, $\Gfia$ with their $2\ell\times2\ell$ submatrices corresponding to these rows.
The $i$-th column  of $\Jfi,$   $1\le i\le 2\ell$ is\begin{equation}\label{colonne:0}
\sqrt{h}\begin{bmatrix} 0\\
\tau_{-(\ell-1)h}\what{\varphi_i}\\
\vdots \\
\what{\varphi_i}\\
\\ \vdots \\
\tau_{(\ell-2)h}\what{\varphi_i}\\
\tau_{(\ell-1)h}\what{\varphi_i}\\
\end{bmatrix}{\rm{in}}\  I_{-}\hskip0.5truecm
\sqrt{h}\begin{bmatrix} \tau_{-\ell h}\what{\varphi_i}\\
\tau_{-(\ell-1)h}\what{\varphi_i}\\
\vdots \\
\what{\varphi_i}\\
\\ \vdots \\
\tau_{(\ell-2)h}\what{\varphi_i}\\
\tau_{(\ell-1)h}\what{\varphi_i}\\
\end{bmatrix}\ {\rm{in}}\  I\hskip0.5truecm
\sqrt{h}\begin{bmatrix} \tau_{-\ell h}\what{\varphi_i}\\
\tau_{-(\ell-1)h}\what{\varphi_i}\\
\vdots \\
\what{\varphi_i}\\
\\ \vdots \\
\tau_{(\ell-2)h}\what{\varphi_i}\\
0
\end{bmatrix}\  {\rm{in}}\  I_{+} %\hskip.05em .
\end{equation}
 
\vskip.5truecm\noindent
(see  Lemma 3.3 in \cite{DP}).
The same  formulas hold for the matrix $\Jfidual$, with $\varphi$ replaced by $\varphi^*.$  We   note that the matrices  $ { \Gfi}$ and ${\Jfi}$ have the same rank.
Let $A$  be a $n\times m$ matrix with complex entries, $n\le m$; we shall denote by $\|A \|$ the norm of $A$ as linear operator from $\C^m$ to $\C^n$ and by $[A]_n$ the sum of the squares of the absolute values of the minors of order $n$ of $A.$
   \begin{theorem}\label{maingen}\cite[Theorem~3.6]{DP} Suppose that \
   $ {\omega}/{\ell}\le h< {\omega}/({\ell- {1}/{2}}).$ 
     Let $\Phi= \{\varphi_j:1\le j\le2\ell\} $ be a subset of $\BL$.    Then $\Efi$ is a frame for $\BL$ if and only if  there exist positive constants $\delta,\gamma,\sigma$ and $\eta$ 
such that 
    \begin{equation}\label{maingen:1}
 \delta\le \sum_{j=1}^{2\ell}|\what{\varphi}_j|^2\le\gamma
 \hskip1truecm a.\,e.\  in  \quad (-\omega,\omega),
\end{equation}
  \begin{equation}\label{maingen:2}
  \Apen{2\ell-1} \ge \sigma  \hskip1truecm a.e.\  in \quad  I_{-}\cup I_{+}\hskip 0.04em, 
\end{equation}
   \begin{equation}\label{maingen:3}
  |\det\,{\Jfi}| \ge \eta  \hskip1truecm a.e.\  in \quad  I.\end{equation}
If    
$ h= {\omega}/{\ell}$  the intervals $I_{-}$ and $I_{+}$ are empty. In this case  $\Efi$ is a Riesz basis  for $\BL$ if and only if    conditions (\ref{maingen:1})   and 
 (\ref{maingen:3}) hold.  
\end{theorem}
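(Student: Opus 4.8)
The plan is to deduce the statement from the Ron--Shen fibrewise characterization of frames for shift-invariant spaces in \cite{RS}, combined with the explicit description of the nonzero rows of the pre-Gramian recorded in (\ref{colonne:0}). Recall from \cite{RS} that $\Efi$ is a Bessel sequence for $\BL$ iff $\esssup_{x\in[0,h]}\|\Jfi(x)\|<\infty$, and that it is a frame for $\BL$ iff, in addition, for a.e.\ $x$ the range of $\Jfi(x)$ equals the fibre of $\what{\BL}$ at $x$ and the smallest nonzero singular value of $\Jfi(x)$ is essentially bounded away from $0$. Since $\what\varphi$ is supported in $[-\omega,\omega]$ for every $\varphi\in\BL$, that fibre is the span of the unit vectors $e_j$ with $x+jh\in[-\omega,\omega]$; the first thing I would do is count these indices. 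Using $\omega/\ell\le h<\omega/(\ell-\frac12)$ and the definitions of $I_-,I,I_+$, one finds exactly the $2\ell$ indices $-\ell\le j\le\ell-1$ when $x\in I$, and exactly $2\ell-1$ of them when $x\in I_-$ or $x\in I_+$ (the row $j=-\ell$, respectively $j=\ell-1$, being forced to vanish). This is the content of (\ref{colonne:0}): after deleting identically zero rows, $\Jfi(x)$ is a $2\ell\times 2\ell$ matrix on $I$ and a $(2\ell-1)\times 2\ell$ matrix on $I_-\cup I_+$, and ``range $=$ fibre'' is the same as ``full row rank''.

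Next I would translate the three conditions. For the Bessel part, $\|\Jfi(x)\|^2$ is comparable to $\|\Jfi(x)\|_{\mathrm{HS}}^2=\sum_{i=1}^{2\ell}[\what{\varphi_i},\what{\varphi_i}](x)$; since the cosets $x+h\Z$ tile $\R$ and each $\what{\varphi_i}$ is supported in $(-\omega,\omega)$, the essential supremum of this quantity over $x\in[0,h]$ equals $\esssup_{(-\omega,\omega)}\sum_i|\what{\varphi_i}|^2$, which is the upper bound in (\ref{maingen:1}). For the lower frame bound together with completeness: once the upper bound is available, the smallest nonzero singular value of $\Jfi(x)$ is bounded below a.e.\ iff the product of all its nonzero squared singular values is, and by Cauchy--Binet that product equals $\det\bigl(\Jfi(x)\Jfistar(x)\bigr)=\Apen{d(x)}(x)$, where $d(x)$ is the number of surviving rows; on $I$ this is $|\det\Jfi(x)|^2$, giving (\ref{maingen:3}), and on $I_-\cup I_+$ it is $\Apen{2\ell-1}(x)$, giving (\ref{maingen:2}). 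A positive lower bound on $\Apen{d(x)}$ also forces $\Jfi(x)$ to have rank $d(x)$, hence full row rank, i.e.\ $\overline{\mathrm{span}}\,\Efi=\BL$. The remaining lower bound $\delta\le\sum_i|\what{\varphi_i}|^2$ on $(-\omega,\omega)$ is necessary --- otherwise any $f\in\BL$ with $\what f$ supported on a set where the sum vanishes would be orthogonal to every $\tau_{nt_o}\varphi_i$, contradicting the lower frame bound --- and conversely it follows from (\ref{maingen:2})--(\ref{maingen:3}): Hadamard's inequality applied to the positive semidefinite matrix $\Jfi(x)\Jfistar(x)$ bounds $\Apen{d(x)}(x)$ from above by the product over the surviving rows of $h\sum_i|\what{\varphi_i}(x+jh)|^2$, so a lower bound on the left together with the already known upper bound on all but one factor yields a lower bound on that factor. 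Assembling these equivalences proves the ``iff''.

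Finally, when $h=\omega/\ell$ one has $I_-=I_+=\emptyset$ and $I=(0,h)$, so $\Jfi(x)$ is the square matrix of order $2\ell$ for a.e.\ $x$; then $\Tfi$ is injective iff $\Jfi(x)$ is injective for a.e.\ $x$ iff $\det\Jfi(x)\ne0$ a.e., whose uniform version is (\ref{maingen:3}), so in this range a frame is automatically a Riesz basis and (\ref{maingen:1}) together with (\ref{maingen:3}) are the right conditions. The step I expect to take the most care is the dimension count of the first paragraph --- the interval arithmetic isolating exactly which rows of the pre-Gramian survive on $I_-$, $I$, $I_+$, which is where the two-sided restriction on $h$ is genuinely used --- together with the passage, via Cauchy--Binet and Hadamard, between ``smallest nonzero singular value bounded below'' and the minor conditions $\Apen{2\ell-1}$ and $|\det\Jfi|$; the rest is bookkeeping with the bracket product and the tiling of $\R$ by $x+h\Z$.
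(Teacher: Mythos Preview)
This theorem is not proved in the present paper; it is quoted verbatim from \cite[Theorem~3.6]{DP} and restated here ``for the sake of the reader''. So there is no in-paper proof to compare against. That said, your sketch is the natural approach and, from the description given here (``The result is based on the analysis of the structure of the matrix $\Jfi$'') together with the explicit column formulas (\ref{colonne:0}) imported from \cite{DP}, it is also the approach taken in the cited paper: identify the nonvanishing rows of the pre-Gramian on each of $I_-,I,I_+$, and then translate the Ron--Shen fibrewise criterion into conditions on the $2\ell\times2\ell$ (resp.\ $(2\ell-1)\times2\ell$) submatrices.

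Your argument is correct. A couple of small points worth tightening. First, the sentence ``the essential supremum of this quantity over $x\in[0,h]$ \emph{equals} $\esssup_{(-\omega,\omega)}\sum_i|\what{\varphi_i}|^2$'' is slightly off: the Hilbert--Schmidt norm is a \emph{sum} of at most $2\ell$ such terms, so the two quantities are comparable (with a constant depending on $\ell$), not equal; since you already wrote ``comparable'' just before, this is only a wording slip. Second, your necessity argument for the lower bound $\delta$ (``otherwise any $f$ with $\what f$ supported where the sum vanishes\ldots'') handles only the case where the sum is exactly zero on a set of positive measure; the cleaner argument is the one you implicitly have available: if $e_j$ lies in the fibre and the range of $\Jfi(x)$ equals the fibre, then $e_j^*\Jfi(x)\Jfistar(x)e_j\ge\lambda_{\min}^{\neq0}$, so the diagonal entry $h\sum_i|\what{\varphi_i}(x+jh)|^2$ inherits the lower frame bound. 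Your Hadamard observation that the lower half of (\ref{maingen:1}) is in fact redundant given (\ref{maingen:2})--(\ref{maingen:3}) and the upper half of (\ref{maingen:1}) is a nice extra, though not needed for the stated ``iff''.
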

Next we consider the case 
$ \omega/ ({\ell- {1}/{2}})\le h<  {\omega}/({\ell-1})$. 
Then  $0<\omega- (\ell-1)h  \le -\omega+\ell h<h;$ in this case  we denote by  $K_{-},K,K_{+}$ the intervals defined by
\begin{equation}\label{K}
K_{-}=(0,\omega-(\ell-1) h),\quad  K=  (\omega-(\ell-1) h,-\omega+\ell h),\quad  K_+=(-\omega+\ell h,h).
\end{equation}
\vskip.05 truecm
By   %Corollary
  (\ref{lenBL})   $\leng{\BL}{t_o}=2\ell-1$. 
Let    $\Phi=\{\varphi_j:1\le j\le 2\ell-1\}$ be a subset of $\BL$  of cardinality $2\ell-1$. By \cite[Lemma 3.3]{DP} 
%\ref{pretraslate}  
all the rows of the matrix $\Jfi,$ except possibly $\big(\tau_{j h}\what{\varphi}_1,\tau_{j h}\what{\varphi}_2,\dots,\tau_{j h}\what{\varphi}_{2\ell}\big)$, $-\ell\le j\le\ell-1,$ vanish. 
Thus we identify the infinite matrices $\Jfi$,   $\Jfidual$, their adjoints and the matrices $\Gfi$, $\Gfia$ with their $2\ell-1\times2\ell-1$ submatrices corresponding to these rows. 
The $i$-th column  of $\Jfi,$   $1\le i\le  2 \ell-1$ is 
\begin{equation}\label{colonne:1}
\sqrt{h}\begin{bmatrix} 
\tau_{-(\ell-1)h}\what{\varphi_i}\\
\vdots \\
\what{\varphi_i}\\
\\ \vdots \\
\tau_{(\ell-2)h}\what{\varphi_i}\\
 \tau_{(\ell-1)h}\what{\varphi_i}\\
\end{bmatrix}{\rm{in}}\  K_{-}\hskip0.3truecm
\sqrt{h}\begin{bmatrix} %\tau_{-\ell h}\what{\varphi_i}\\
\tau_{-(\ell-1)h}\what{\varphi_i}\\
\vdots \\
\what{\varphi_i}\\
\\ \vdots \\
\tau_{(\ell-2)h}\what{\varphi_i}\\
 0\\
\end{bmatrix}\ {\rm{in}}\  K\hskip0.3truecm
\sqrt{h}\begin{bmatrix} \tau_{-\ell h}\what{\varphi_i}\\
\tau_{-(\ell-1)h}\what{\varphi_i}\\
\vdots \\
\what{\varphi_i}\\
\\ \vdots \\
\tau_{(\ell-2)h}\what{\varphi_i} \end{bmatrix}\  {\rm{in}}\  K_{+}.
\end{equation}
\vskip1truecm 
\par
\begin{theorem}\label{maingendue}  \cite[Theorem~3.7]{DP} Suppose that 
$ {\omega}/({\ell-{1}/{2}})\le h<  {\omega}/({\ell-1}), $ 
 $\ell\not=1.$   Let  $\Phi=\{\varphi_j:$$1\le j<2\ell-1\}$ be a subset of $  \BL$.
Then $\Efi$ is a frame for $\BL$ if and only if  there exist positive constants $\delta, \gamma, \sigma$ and $\eta$ such that
    \begin{equation}\label{maingendue:1}
\delta\le \sum_{j=1}^{2\ell-1}|\what{\varphi}_j|^2\le\gamma
 \hskip1truecm a.e. \   in\  (-\omega,\omega),
\end{equation}
  \begin{equation}\label{maingendue:2}
\Apen{2\ell-2}\ge \sigma  \hskip1truecm a.e.\    in \ K, \end{equation}
   \begin{equation}\label{maingendue:3}
|\det\, \Jfi|\ge \eta  \hskip1truecm a.e.\   in \   K_{-}\cup K_{+}.
\end{equation}
If $h={\omega}/(\ell-\frac{1}{2})$  the interval $K$ is empty. In this case  $\Efi$ is a Riesz basis if and only if conditions  (\ref{maingendue:1}) and (\ref{maingendue:3}) hold. 
\end{theorem}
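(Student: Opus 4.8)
The plan is to derive Theorem~\ref{maingendue} from the fiberwise (range-function) characterisation of frames for finitely generated shift-invariant spaces due to Ron and Shen \cite{RS}, combined with the explicit block form (\ref{colonne:1}) of the pre-Gramian. Recall that a countable family $\Efi$ is a frame, with bounds $0<A\le B$, for the closed linear span $S(\Phi)$ of $\Efi$ if and only if, for a.e.\ $x\in[0,h]$, the spectrum of the Gramian $\Gfi(x)=\Jfistar(x)\Jfi(x)$ is contained in $\{0\}\cup[A,B]$; moreover $\Efi$ is a frame for $\BL$ exactly when, in addition, $S(\Phi)=\BL$. By \cite[Lemma~3.3]{DP}, recalled above, all but $2\ell-1$ rows of $\Jfi$ vanish, so throughout one works with the finite matrices of (\ref{colonne:1}) and with $\Gfi$, which has the same rank as $\Jfi$. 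The argument splits into the Bessel (upper) bound, the lower frame bound, and the identification $S(\Phi)=\BL$.

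For the Bessel bound, the squared Frobenius norm of $\Jfi(x)$ equals $\sum_{j=1}^{2\ell-1}[\what{\varphi_j},\what{\varphi_j}](x)=h\sum_{j=1}^{2\ell-1}\sum_{k}|\what{\varphi_j}(x+kh)|^2$; since each $\what{\varphi_j}$ is supported in $[-\omega,\omega]$ and $\|\Jfi(x)\|$ is squeezed between the norm of a single column and the Frobenius norm, $\|\Gfi\|_\infty=\|\Jfi\|_\infty^2$ is finite if and only if $\sum_{j=1}^{2\ell-1}|\what{\varphi_j}|^2$ is essentially bounded on $(-\omega,\omega)$. This supplies the upper frame inequality and shows the upper estimate in (\ref{maingendue:1}) is necessary.

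The core is the lower bound, treated separately on the three pieces of $[0,h]=K_{-}\cup K\cup K_{+}$. On $K_{-}\cup K_{+}$ the matrix $\Jfi$ is square of order $2\ell-1$, so $\det\Gfi=|\det\Jfi|^2$; given the entrywise bound just established, the least eigenvalue of $\Gfi$ is bounded away from $0$ a.e.\ on $K_{-}\cup K_{+}$ if and only if $|\det\Jfi|\ge\eta$ there, which is (\ref{maingendue:3}). On $K$ the last row of (\ref{colonne:1}) vanishes, so $\Jfi$ is effectively a $(2\ell-2)\times(2\ell-1)$ matrix; its nonzero singular values coincide with the eigenvalues of the $(2\ell-2)\times(2\ell-2)$ matrix $\Jfi\Jfistar$, and by the Cauchy--Binet formula $\det(\Jfi\Jfistar)=\Apen{2\ell-2}$. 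Hence, again using the entrywise bound, the nonzero eigenvalues of $\Gfi$ stay bounded away from $0$ a.e.\ on $K$ if and only if $\Apen{2\ell-2}\ge\sigma$, i.e.\ (\ref{maingendue:2}). One records along the way that on $K$ the Gramian has rank $2\ell-2<2\ell-1$; this positive-measure rank drop is exactly why $\Efi$ cannot be a Riesz basis when $K\neq\emptyset$.

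Finally, to identify $S(\Phi)$ with $\BL$ one uses that the fiber of $\BL$ over $x$ has dimension equal to the number of indices $k$ with $x+kh\in[-\omega,\omega]$ — namely $2\ell-1$ on $K_{-}\cup K_{+}$ and $2\ell-2$ on $K$, the missing translate being precisely the one producing the vanishing row — while the fiber of $S(\Phi)$ over $x$ is the column span of $\Jfi(x)$. Thus $S(\Phi)=\BL$ precisely when $\Jfi(x)$ has full column rank $2\ell-1$ a.e.\ on $K_{-}\cup K_{+}$ and rank $2\ell-2$ a.e.\ on $K$, and the pointwise positivity of $|\det\Jfi|$ on $K_{-}\cup K_{+}$ and of $\Apen{2\ell-2}$ on $K$, together with the lower estimate $\delta\le\sum_{j=1}^{2\ell-1}|\what{\varphi_j}|^2$ on $(-\omega,\omega)$ (guaranteeing that no frequency of the band is left uncovered), delivers this. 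Conversely, if $\Efi$ is a frame for $\BL$ then $S(\Phi)=\BL$ forces the a.e.\ rank conditions, and the uniform lower frame bound promotes a.e.\ positivity to the uniform constants $\eta,\sigma,\delta$. When $h=\omega/(\ell-\tfrac12)$ the interval $K$ is empty, $\Jfi$ is everywhere square of order $2\ell-1$, so a frame for $\BL$ must automatically have $\Tfi$ injective; hence $\Efi$ is a Riesz basis if and only if $|\det\Jfi|$ is bounded above and below, i.e.\ (\ref{maingendue:1}) and (\ref{maingendue:3}). The step I expect to require the most care is this last fiberwise matching: one must track exactly which translates $x+kh$ enter $[-\omega,\omega]$ over each of $K_{-},K,K_{+}$, and reconcile the a.e.\ conclusions of the Ron--Shen criterion with the uniform constants of the statement — in particular, recovering the uniform bound $\delta$ in (\ref{maingendue:1}) from (\ref{maingendue:2})--(\ref{maingendue:3}) and the upper bound via minor expansions as $x+kh$ sweeps out the band.
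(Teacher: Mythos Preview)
The present paper does not prove Theorem~\ref{maingendue}: it is quoted from \cite[Theorem~3.7]{DP} as background in Section~\ref{s: Preliminaries}, and no argument is supplied here. So there is no ``paper's own proof'' to compare against within this article.

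That said, your outline is the natural one and almost certainly coincides with what is done in \cite{DP}: invoke the Ron--Shen fiberwise criterion, use the block form (\ref{colonne:1}) to treat $K_{-}\cup K_{+}$ (square $(2\ell-1)\times(2\ell-1)$ case, where $|\det\Jfi|^2=\det\Gfi$ controls the bottom eigenvalue once the top is bounded) and $K$ (rank-$(2\ell-2)$ case, where Cauchy--Binet gives $\det(\Jbfi\Jbfistar)=\Apen{2\ell-2}$), and match ranks with the fiber dimensions of $\BL$ to force $S(\Phi)=\BL$. The Riesz-basis clause for $K=\emptyset$ follows exactly as you say.

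The only point that is still a sketch is the one you flag yourself, namely the passage between the uniform constants $\delta,\sigma,\eta$ and the fiberwise frame bounds. Two remarks make this precise. First, in the direction (\ref{maingendue:1})--(\ref{maingendue:3}) $\Rightarrow$ frame, the lower bound $\delta$ in (\ref{maingendue:1}) is actually redundant: Hadamard's inequality applied row by row to $\Jfi$ on $K_{-}\cup K_{+}$ (respectively to $\Jbfi$ on $K$, via $\det(\Jbfi\Jbfistar)\le\prod_k\|\text{row}_k\|^2$) shows that (\ref{maingendue:2}), (\ref{maingendue:3}) and the upper bound $\gamma$ already force each row norm, hence $\sum_j|\what{\varphi_j}(\xi)|^2$, to be bounded below. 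Second, in the converse direction, the clean way to extract $\delta$ from the frame lower bound $A$ is to note that on each interval the matrix $\Jfi\Jfistar$ (restricted to the nonvanishing rows) satisfies $\Jfi\Jfistar\ge A\,I$, and its diagonal entries are exactly $h\sum_j|\what{\varphi_j}(x+kh)|^2$; as $x$ ranges over $[0,h]$ and $k$ over the active row indices, $x+kh$ sweeps out $(-\omega,\omega)$. With these two estimates inserted, your argument is complete.
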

 \section{Recovery of missing samples}
\label{s: Recovery}
In  this  section  we   consider  the problem of    reconstructing a 
band-limited function   when a finite set  of samples is missing. Let $t_o$ be a  positive real number  and $L$ the length of $\BL$ as  
$t_o$-shift-invariant space.  Let  $\Phi=\{\varphi_1,\dots,\varphi_L\}$ be a   set of functions in $\BL$  such that $\Efi$ is a frame for $\BL.$ Then any    function in $\BL$ can be  reconstructed via  the syntesis formula (\ref{expansion}). We recall  that  the coefficients of the expansion    are   the   samples of  the functions $f_j=f\ast\tilde{\varphi}_j$ at  integer multiples of $t_o$  and we may rewrite (\ref{expansion}) as  \begin{eqnarray}\label{expansion2}
f=   \sum_{j=1}^{L} \sum_{n\in \Z} f_j (nt_o)   \tau_{-n t_o}\varphi_j^*  \qquad \forall f\in \BL.\end{eqnarray}  
  The recovery of lost samples in band-limited signals has  already been investigated   by Ferreira for the classical  (one-channel) Shannon formula \cite{F}, and  by Santos and Ferreira for  a particular   two-channel derivative sampling \cite{SF}. In the  latter article,  the authors show that a finite number of missing samples of the function or  its derivative can be recovered.  In \cite{KK}     Hong and   Kwon have generalized these results to a two-channel  sampling formula, finding  sufficient conditions for the    recovery of  missing samples.
In both papers  \cite{SF, KK}, the authors  work with particular reconstruction formulas, obtained  by projecting   Riesz  basis generators  of  $\BL$   and their  duals    into   the space  $B_{\omega_a} $ with $\omega_a<\omega$. With this technique  the   projected family is a   frame; note  that     projecting  the  dual of a Riesz basis  does not yield the  canonical  dual. In particular,  the coefficients of the expansion of a function computed with respect to the projected duals   are not minimal in  the least square norm.   \par 
 We find  sufficient  conditions  such that, if $\Efi$ is a frame (and not a Riesz basis), any  finite set of missing samples may  be recovered (see Theorems \ref{t:recontructK} and  \ref{t:recontructI} below).\\ 
 In the   recovery  of missing    samples     the structure of the mixed Gramian matrix  $\Gfia= \Jfistar \Jfidual$ plays a crucial role. 
 Propositions     \ref{p:gfifistarK} and \ref{p:gfifistar} below show   that,   a.e. in the interval $[0,h]$,   the matrix $I-\Gfia$    is either the identity    or a   projection  on a subspace of codimension one. To prove them,  we    recall some matrix identities   obtained in \cite{DP}. 
 By formula   (4.2)   in \cite{DP}
  \begin{equation}\label{generators:1}
 \Jfi=\Jfi\Jfistar\, \Jfidual=\Jfi  \Gfia.
 \end{equation} 
 Under the assumptions of Theorems \ref{maingen}  or   \ref{maingendue}, the interval $[0,h]$ is the disjoint union of three intervals where the pre-Gramian is either invertible or has rank $L-1$.  Where the pre-Gramian is invertible   one 
 can solve for $\Jfidual$ in (\ref{generators:1}), obtaining  that 
\begin{equation}\label{Gei:1}
\Jfidual=(\Jfistar)^{-1}.\end{equation}
  In  the intervals where  the pre-Gramian has rank $L-1,$  either its first or  last row   vanishes; this happens   in the intervals $I_{-}\cup I_{+}$  in    formula (\ref{colonne:0}) and in the interval $K$ in (\ref{colonne:1}). We shall denote by $\Jbfi$ and $\Jbfidual$ the $(L-1)\times L\, $  submatrices of $\Jfi$ and $\Jfidual$ obtained by deleting the vanishing row.  In this case $   \Gfia=\Jbfistar\Jbfidual $. Since $ \Jbfidual$ 
is the Moore-Penrose inverse $(\Jbfistar)^\dagger$  of $\Jbfistar$,  we have that
\begin{equation}\label{Gei:3}
 \Gfia=\Jbfistar (\Jbfistar)^\dagger . 
\end{equation}
 We refer the reader to \cite{BIG} for the definition and the properties of  the Moore-Penrose  inverse of a  matrix.
  \\ \indent
 We shall denote   by $W$ the cross product of the rows of the matrix $\  h ^{-1/2}\Jbfi$. 
 We observe that, if 
 \ $ {\omega}/({\ell-{1}/{2}})< h< {\omega}/({\ell-1}),$ 
 $\ell\not=1$, then 
 \begin{equation}\nonumber
 W=\crossp{j=-\ell+1}{\ell-2}\tau_{jh}\what{\Phi}\qquad {\rm a.e.\  in\ } K;\end{equation}
while, if 
  $ {\omega}/{\ell}< h<  {\omega}/(\ell-  1/2) ,$ 
  then 
 \begin{equation} \nonumber W = \crossp{j=-\ell+1}{\ell-1}\tau_{jh}\what{\Phi}\quad {\rm a.e.\  in\ }\ I_{-} \qquad W =\displaystyle{\crossp{j=-\ell}{\ell-2}}\tau_{jh}\what{\Phi} \quad{\rm a.e.\  in\ }\  I_{+}.\end{equation}  
 Observe that, since $W$ is orthogonal to the rows of $\Jbfi$ and ${\rm dim\ Ker\, }\Jbfi=1$, then    ${\rm   Ker\, }\Jbfi={\rm span} (W).$   
\par  We   denote by $I_n$ the    $n\times n$  identity matrix. 
 Given a vector $v\in \C^n$, we  denote by    $P_{v}$ the orthogonal projection on $v$   in $\C^{n}$.
 \begin{proposition}\label{p:gfifistarK} 
  Let ${\omega}/(\ell- 1/2)< h< {\omega}/(\ell-1),$ 
   $\ell\not=1$ and let 
 $\Phi=$\penalty-10000$\{\varphi_j,$ $ 1\le j\le2\ell-1\}$ be a subset of  $  \BL.$  
If   $\Efi$ is a frame for $\BL$
 then      
 \begin{align} 
 \Gfia&=I_{2\ell-1}\hskip 1truecm a.e.\, \textrm{in} \
   K_-\cup K_+, \label{p:GI}     \\
   {}\nonumber\\ 
   \Gbfia&=I_{2\ell-1}- P_{{}_W}
    \hskip 1truecm a.e.\, \textrm{in} \ K.\label{p:GIP}       \end{align}  
 \end{proposition}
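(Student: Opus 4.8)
The plan is to treat the two regions $K_-\cup K_+$ and $K$ separately, in each case combining the description of the columns of $\Jfi$ and $\Jfidual$ in (\ref{colonne:1}) with the factorization identities (\ref{generators:1}), (\ref{Gei:1}) and (\ref{Gei:3}) recalled above.

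\emph{On $K_-\cup K_+$.} Here Theorem \ref{maingendue}, and specifically (\ref{maingendue:3}), gives $|\det\Jfi|\ge\eta>0$ a.e., so $\Jfi$ is an invertible $(2\ell-1)\times(2\ell-1)$ matrix a.e.\ on $K_-\cup K_+$. Hence (\ref{Gei:1}) applies and yields $\Jfidual=(\Jfistar)^{-1}$, so that
\begin{equation}\nonumber
\Gfia=\Jfistar\Jfidual=\Jfistar(\Jfistar)^{-1}=I_{2\ell-1}\qquad\text{a.e.\ in } K_-\cup K_+,
\end{equation}
which is (\ref{p:GI}). (Equivalently, multiply (\ref{generators:1}) on the left by $\Jfi^{-1}$.)

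\emph{On $K$.} By (\ref{colonne:1}) the last row of $\Jfi$, and likewise of $\Jfidual$, vanishes, so ${\rm rank}\,\Jfi\le2\ell-2$; on the other hand (\ref{maingendue:2}) gives $\Apen{2\ell-2}\ge\sigma>0$ a.e.\ in $K$, and since every $(2\ell-2)$-minor of $\Jfi$ that uses the vanishing row is zero, this says exactly that the $(2\ell-2)\times(2\ell-1)$ matrix $\Jbfi$ obtained by deleting that row has full rank $2\ell-2$ a.e.\ in $K$. Therefore $\dim{\rm Ker}\,\Jbfi=1$, so ${\rm Ker}\,\Jbfi={\rm span}(W)$ as observed before the statement; moreover $W\neq0$ a.e.\ in $K$, since the components of $W$ are, up to sign, the maximal minors of $h^{-1/2}\Jbfi$, whence $|W|^2=h^{-(2\ell-2)}\Apen{2\ell-2}\ge h^{-(2\ell-2)}\sigma$. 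Now (\ref{Gei:3}) gives $\Gbfia=\Jbfistar(\Jbfistar)^\dagger$, and by the defining property of the Moore--Penrose inverse $\Jbfistar(\Jbfistar)^\dagger$ is the orthogonal projection of $\C^{2\ell-1}$ onto ${\rm ran}\,\Jbfistar=({\rm Ker}\,\Jbfi)^\perp=W^\perp$. As $W^\perp$ is a hyperplane, this projection equals $I_{2\ell-1}-P_{{}_W}$, which is (\ref{p:GIP}).

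The argument is short once the material of Section \ref{s: Preliminaries} and the identities above are in hand; the only step needing real care is the passage from the analytic hypothesis (\ref{maingendue:2}) on the minors of $\Jfi$ to the two facts that ${\rm rank}\,\Jbfi=2\ell-2$ and $W\neq0$ a.e.\ in $K$, since these are precisely what make $\Jbfistar(\Jbfistar)^\dagger$ a genuine rank-$(2\ell-2)$ orthogonal projection and give a meaning to $P_{{}_W}$.
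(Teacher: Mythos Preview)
Your proof is correct and follows essentially the same route as the paper: invertibility of $\Jfi$ on $K_-\cup K_+$ combined with (\ref{generators:1})/(\ref{Gei:1}) gives the identity there, while on $K$ the Moore--Penrose identity (\ref{Gei:3}) makes $\Gbfia$ the orthogonal projection onto ${\rm ran}\,\Jbfistar=({\rm Ker}\,\Jbfi)^\perp=W^\perp$. Your version is in fact a bit more careful than the paper's, spelling out why $\Jbfi$ has full rank $2\ell-2$ and why $W\neq0$ a.e.\ in $K$ from (\ref{maingendue:2}).
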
 
\begin{proof}
  By (\ref{maingendue:3}), the pre-Gramian $\Jfistar$ is invertible in the intervals $ K_-$ and  $K_+$; hence  formula (\ref{p:GI}) follows   from (\ref{generators:1}).  Consider now the interval $K$. By (\ref{Gei:3}) and the properties of the Moore-Penrose inverse, $  \Gfia$ is the orthogonal projection on $Ran(\Jbfistar)$ in $\C^{2\ell-1}$.  On the other hand, since $\Jbfi$ has maximum rank,
$$ span(W)=Ker(\Jbfi)=Ran(\Jbfistar)^\perp.$$ 
Hence the projection on $Ran(\Jbfistar)$ is $I_{2\ell-2}-P_{\,W}$. This  proves (\ref{p:GIP}). \\
\end{proof}
\noindent A similar argument yields
  \begin{proposition}\label{p:gfifistar}
 Let 
 ${\omega}/{\ell}< h< {\omega}/(\ell- {1}/{2})$   and let 
 $\Phi=\{\varphi_j, 1\le j\le2\ell\}$ be  a subset of $ \BL.$
If   $\Efi$ is a frame for $\BL$,  then      
  \begin{align}  
   \Gfia&=I_{2\ell}\hskip 1truecm  a.e.\,\textrm{in}\     I \label{p:GII}
 \\{}\nonumber\\ \Gbfia&= I_{2\ell}-P_{{}_{W}}
  \hskip 1truecm  a.e.\,\textrm{in}\     I_-\cup I_+ .
 \label{p:GIPI} 
  \end{align} 
\end{proposition}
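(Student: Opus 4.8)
The plan is to mirror the proof of Proposition~\ref{p:gfifistarK} verbatim, since the geometric content is identical and only the bookkeeping of which intervals carry a vanishing row changes. First I would invoke Theorem~\ref{maingendue}, specifically hypothesis (\ref{maingendue:3}): in the present case $\omega/\ell<h<\omega/(\ell-\frac12)$ the analogue is (\ref{maingen:3}) of Theorem~\ref{maingen}, which asserts $|\det\Jfi|\ge\eta$ a.e.\ in $I$. Hence on $I$ the pre-Gramian (and its adjoint $\Jfistar$) is invertible, so from the matrix identity (\ref{generators:1}), $\Jfi=\Jfi\Gfia$, one may cancel $\Jfi$ on the left (multiply by $\Jfistar$ then by $(\Jfi\Jfistar)^{-1}$, or simply note $\Jfistar$ invertible) to get $\Gfia=I_{2\ell}$ a.e.\ in $I$. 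This establishes (\ref{p:GII}).

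For (\ref{p:GIPI}) I would work on $I_-\cup I_+$, where by the column description (\ref{colonne:0}) the pre-Gramian $\Jfi$ has a vanishing row (the first row in $I_-$, the last in $I_+$), so by (\ref{maingen:2}) the $(2\ell-1)$-minor bound $\Apen{2\ell-1}\ge\sigma$ forces $\Jbfi$—the $(2\ell-1)\times 2\ell$ submatrix obtained by deleting that zero row—to have full rank $2\ell-1$. Then, exactly as in (\ref{Gei:3}), $\Gbfia=\Jbfistar(\Jbfistar)^\dagger$ is the orthogonal projection of $\C^{2\ell}$ onto $\mathrm{Ran}(\Jbfistar)$. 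Since $\Jbfi$ has maximal rank, $\mathrm{span}(W)=\mathrm{Ker}(\Jbfi)=\mathrm{Ran}(\Jbfistar)^\perp$, where $W$ is the cross product of the rows of $h^{-1/2}\Jbfi$ as described in the text (namely $\crossp{j=-\ell+1}{\ell-1}\tau_{jh}\what{\Phi}$ on $I_-$ and $\crossp{j=-\ell}{\ell-2}\tau_{jh}\what{\Phi}$ on $I_+$). Therefore the projection onto $\mathrm{Ran}(\Jbfistar)$ equals $I_{2\ell}-P_{{}_W}$, giving (\ref{p:GIPI}).

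The only point requiring a word of care—and the ``main obstacle,'' though it is minor—is justifying that $W\neq 0$ a.e., i.e.\ that the relevant cross product is genuinely nonzero, so that $P_{{}_W}$ is a rank-one projection; this is exactly what the full-rank statement $\dim\mathrm{Ker}(\Jbfi)=1$ guarantees, and it is what the bound (\ref{maingen:2}) delivers. One should also check that deleting the correct row in $I_-$ versus $I_+$ does not affect the conclusion: in both cases the surviving rows are $2\ell-1$ consecutive translates $\tau_{jh}\what{\Phi}$ and the argument is symmetric, so the statement $\Gbfia=I_{2\ell}-P_{{}_W}$ holds on all of $I_-\cup I_+$ with $W$ taken to be the appropriate cross product on each piece. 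Apart from this, the proof is a direct transcription of the previous one with $2\ell-1$ replaced by $2\ell$ and $K,K_\pm$ replaced by $I,I_\pm$, so I would simply write: ``The proof is identical to that of Proposition~\ref{p:gfifistarK}, using Theorem~\ref{maingen} in place of Theorem~\ref{maingendue} and the column structure (\ref{colonne:0}) in place of (\ref{colonne:1}).''
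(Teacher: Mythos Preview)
Your proposal is correct and is exactly the approach the paper takes: the paper simply writes ``A similar argument yields'' before stating Proposition~\ref{p:gfifistar}, meaning the proof of Proposition~\ref{p:gfifistarK} carries over with Theorem~\ref{maingen} replacing Theorem~\ref{maingendue} and the column structure~(\ref{colonne:0}) replacing~(\ref{colonne:1}). Your added remarks on why $W\neq 0$ a.e.\ and on the two cases $I_-$, $I_+$ are correct elaborations but not required beyond what the paper provides.
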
 
\par 
Let  $  \cI=\{ \ell_1,\ell_2,\dots,\ell_N\} $  be a  given set of integers. 
We give    sufficient conditions to reconstruct 
the    missing samples  
\begin{equation}\label{coeff}  f_j( nt_o) 
  \hskip 1truecm   n\in\cI,\qquad   1\le j\le L.
\end{equation}
 Theorem  \ref{t:recontructK} deals with the case   
 $ {\omega}/(\ell- 1/2)< h<  {\omega}/(\ell-1), $
   i.e. $L=2\ell-1$, while Theorem \ref{t:recontructI} deals with the case 
  % $\frac{\o}{\ell}\le h< \frac{\o}{\ell-\frac{1}{2}},$
    $ {\omega}/ \ell < h<  {\omega}/(\ell-{1}/{2}),$
     i.e. $L=2\ell.$\\
 The sufficient conditions are expressed   as the linear independence of the components  of the vector $W$ over  the set  $\PI$   of trigonometric polynomials   of the form 
$$p(t)=\sum_{j=1}^N x_j e^{-\frac{2\pi}{h} i \ell_j t}\hskip 1truecm   x_j\in \C.$$
\begin{defn}  A finite family $\{F_j, j=1,\dots,m\},$ $m\in\N,$ of  functions   is $\PI$- linearly  dependent on a set $\Omega\subset[0,h]$ if there exist non-zero trigonometric polynomials $p_1,p_2,\dots p_m,$  in $ \PI$ such that 
\begin{equation}\label{d:lineari}
\sum_{k=1}^m\,p_j(t)\, F_j(t)=0 \hskip1truecm   for  \ a.e.\, t\in \Omega
\end{equation}
\end{defn}
 If    ${\x}=(x_1,x_2,\dots,x_N) $  is a vector  in $\C^N$, we shall  denote by $\what{\x}$ the trigonometric polynomial in $\PI$
\begin{equation}%\label{Xhat}
\what{\x}(t)=\sum_{j=1}^N x_j e^{-\frac{2\pi}{h} i \ell_j   t}
%=\sum_{j=1}^N x_j e^{-  i \ell_j t_o t}
 \hskip 1truecm t\in \R.
\end{equation}
If $ X=\big(\x_1, \x_2,\ldots,\x_L \big) $  with $\x_j\in \C^N$ for $j=1,\ldots,L$, we write
 \begin{equation}\label{Xhat2}   \widehat{X}(t) =\Big(    \widehat{\x_1}(t),   \widehat{\x_2}(t),\ldots, \widehat{\x_L} (t)\Big) \hskip 1truecm t\in \R.
\end{equation}
Thus $X\mapsto \widehat{X}(t)$ maps vectors in $\C^{N\times L}$  to vectors in $\C^L$ for a.e. $t\in \R$.
 \begin{theorem} \label{t:recontructK} Suppose that the assumptions   of Theorem \ref{maingendue} are  satisfied with 
 $h\not= {\omega}/(\ell-1/2)$. 
 Let  
$ \cI \subset \Z$ be an assigned  finite set. If  the components of the vector $W$ are 
$\PI$-independent on $K$, then for every $f\in \BL$
 it is possible to recover the samples 
$f_j(nt_o)$,
 $n\in {\cI},$ $ 1\le j\le 2\ell-1$.  \end{theorem}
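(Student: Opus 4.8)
The plan is to reduce the statement to a \emph{uniqueness} assertion: the only $d\in\BL$ with $d\ast\tilde{\varphi}_j(nt_o)=0$ for every $n\notin\cI$ and every $j\in\{1,\dots,2\ell-1\}$ is $d=0$. Granting this, recovering the missing samples is a finite-dimensional matter. Given $f\in\BL$, split $f=g+r$, where $g=\sum_{j}\sum_{n\notin\cI}f_j(nt_o)\,\tau_{-nt_o}\varphi_j^*$ is determined by the known samples and $r=\sum_{j}\sum_{n\in\cI}f_j(nt_o)\,\tau_{-nt_o}\varphi_j^*$ lies in the finite-dimensional space $V=\mathrm{span}\{\tau_{-nt_o}\varphi_j^*:n\in\cI,\ 1\le j\le2\ell-1\}$. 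For $m\notin\cI$ one knows $r\ast\tilde{\varphi}_j(mt_o)=f_j(mt_o)-g\ast\tilde{\varphi}_j(mt_o)$; the uniqueness assertion says precisely that $v\mapsto\big(v\ast\tilde{\varphi}_j(mt_o)\big)_{m\notin\cI,\,j}$ is injective on $V$, so finitely many of these values determine $r$ through a finite linear system, after which $f_j(nt_o)=g\ast\tilde{\varphi}_j(nt_o)+r\ast\tilde{\varphi}_j(nt_o)$ gives the missing samples for $n\in\cI$.

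To prove the uniqueness, let $d$ be as above and set $d_j=d\ast\tilde{\varphi}_j$. Since the samples with $n\notin\cI$ vanish, the reconstruction formula (\ref{expansion2}) reduces to the finite sum $d=\sum_{j=1}^{2\ell-1}\sum_{n\in\cI}d_j(nt_o)\,\tau_{-nt_o}\varphi_j^*$; taking Fourier transforms, $\widehat{d}=\sum_{j}q_j\,\widehat{\varphi_j^*}$, where $q_j(\xi)=\sum_{n\in\cI}d_j(nt_o)\,e^{-\frac{2\pi}{h}in\xi}$ is an $h$-periodic trigonometric polynomial, and in the notation of (\ref{Xhat2}) the row $q=(q_1,\dots,q_{2\ell-1})$ equals $\widehat{X}$ for the coefficient array $X=(\x_1,\dots,\x_{2\ell-1})\in\C^{N\times(2\ell-1)}$. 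It is enough to show $X=0$, since then $\widehat{d}=0$. Now evaluate the bracket products $[\widehat{d},\widehat{\varphi_{j'}}]$ in two ways. First, by (\ref{braketcoeff}) the $n$-th Fourier coefficient of $[\widehat{d},\widehat{\varphi_{j'}}]$ is $d_{j'}(-nt_o)$, which vanishes unless $-n\in\cI$; hence $[\widehat{d},\widehat{\varphi_{j'}}]=q_{j'}$ as $h$-periodic functions. Second, each $q_j$ being $h$-periodic pulls out of the bracket, so by (\ref{gramianstar}) $[\widehat{d},\widehat{\varphi_{j'}}]=\sum_j q_j\,[\widehat{\varphi_j^*},\widehat{\varphi_{j'}}]=(\Gfia\,q)_{j'}$. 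Comparing, $\Gfia(x)\,q(x)=q(x)$ for a.e.\ $x\in[0,h]$. By Proposition~\ref{p:gfifistarK} this is automatic on $K_-\cup K_+$ (where $\Gfia=I_{2\ell-1}$), while on $K$, where $\Gfia=I_{2\ell-1}-P_W$, it forces $P_{W(x)}\,q(x)=0$ a.e., i.e.\ $\sum_{j=1}^{2\ell-1}\widehat{\x_j}(x)\,W_j(x)=0$ a.e.\ on $K$. Since each $\widehat{\x_j}\in\PI$, the assumed $\PI$-independence of the components of $W$ on $K$ forces $\widehat{\x_j}\equiv0$, hence $\x_j=0$, for every $j$; thus $X=0$ and $d=0$.

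The step I expect to be the main obstacle is the two-sided evaluation of the bracket products. It relies on the fact, recalled before Theorem~\ref{maingendue}, that on each of $K_-,K,K_+$ all but the $2\ell-1$ relevant rows of the pre-Gramian vanish, so that $\Gfia$ is genuinely the $(2\ell-1)\times(2\ell-1)$ matrix built from the columns (\ref{colonne:1}) and the identity $\Gfia q=q$ is the correct finite-dimensional statement; it also requires keeping track of the conjugations implicit in the description of $W$ as a cross product of translates of $\widehat{\Phi}$, so that $P_Wq=0$ is correctly read off as the relation $\sum_j\widehat{\x_j}W_j=0$ in $\PI$. One should also check that $K$ has positive measure — this is exactly where the hypothesis $h\ne\omega/(\ell-\frac{1}{2})$ enters — and that $W\ne0$ a.e.\ on $K$, which follows from (\ref{maingendue:2}); both are needed for the final relation to be a genuine $\PI$-dependence among the components of $W$.
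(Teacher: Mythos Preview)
Your argument is correct and reaches the same endpoint as the paper's proof by a somewhat different and arguably cleaner route. The paper writes the recovery explicitly as a linear system $(I-S)X=B$, where $S$ is the $N(2\ell-1)\times N(2\ell-1)$ block matrix with entries $\varphi_j^*\ast\tilde{\varphi}_k\big((\ell_m-\ell_p)t_o\big)$, and shows that $1$ is not an eigenvalue of $S$ via the quadratic-form identity
\[
X^*X-X^*SX=\frac{1}{h}\int_K\bigl\|(I_{2\ell-1}-\Gfia(t))\,\widehat{X}(t)\bigr\|^2\,dt,
\]
so that an eigenvector $X^o$ with $SX^o=X^o$ forces $(I_{2\ell-1}-\Gfia)\widehat{X}^o=0$ a.e.\ on $K$ and hence a $\PI$-dependence among the components of $W$. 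You instead reduce to a uniqueness statement and obtain the pointwise relation $\Gfia\,q=q$ directly, by evaluating $[\widehat d,\widehat{\varphi_{j'}}]$ in two ways; this bypasses the matrix $S$ and the integral identity and lands on $P_W\widehat X=0$ on $K$ more transparently. What the paper's organization buys is an explicit finite system $(I-S)X=B$ ready for numerical use (as in Section~\ref{s:firstorder}); what yours buys is a sharper conceptual formulation---the obstruction is exactly the existence of a nonzero $d\in\BL$ whose samples off $\cI$ all vanish---and a shorter derivation. The conjugation issue you flag at the end is present in the paper's own argument as well (their relation reads $\sum_j\overline{\widehat{\x_j^o}}\,W_j=0$), so it is not a defect of your approach relative to theirs.
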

 \begin{proof}
 Let  $  \cI=\{\ell_1,\ell_2,\dots,\ell_N\}$.  By  convolving  with   $\widetilde{\varphi}_k $ both sides of   expansion formula (\ref{expansion2}),  where  $L=2\ell-1$,   
 and evaluating  in $\ell_m t_o$,  we get
  \begin{equation} \nonumber
f_k(\ell_m t_o)
 =   \sum_{j=1}^{2\ell-1} \sum_{n\in \Z} 
  \,  f_j(nt_o)
  \, \big( \varphi_j^* \ast \widetilde {\varphi}_k \big)
 (\ell_m t_o-nt_o)   \end{equation}  
 $k=1,\dots,2\ell-1$, $m=1\dots,N$.
Next we isolate in the left hand  side the  terms containing  the unknown samples 
       \begin{align} \nonumber 
f_k(\ell_m t_o)-  \, 
    & \sum_{j=1}^{2\ell-1}
     \sum_{p=1 }^{N} f_j(\ell_p t_o)\ 
    \big( \varphi_j^* \ast \widetilde{ \varphi}_k \big)(\ell_m t_o-\ell_p t_o)  \nonumber \\ {} \label{t:system} 
  \\=&   \sum_{j=1}^{2\ell-1} 
   \sum_{n\not\in  \cI }  f_j(nt_o)\ 
   \big( \varphi_j^* \ast \widetilde {\varphi}_k)\big)(\ell_m t_o-nt_o)   \nonumber 
   \end{align}  
 $k=1,\dots,2\ell-1$, $1\le m\le N.$ 
This is a system of $N(2\ell-1)$ equations in the $N(2\ell-1)$ unknowns  
 $f_k(\ell_m t_o).$
To write it in a more compact form, we denote  the unknowns by  $x_j(m)$  and  the right hand side of equation (\ref{t:system}) by $b_k(m)$, i.e.
 \begin{align}\nonumber%\label{t:unknowns}
x_k(m)&=  f_k(\ell_m t_o)\\
  b_k(m)&=  \sum_{j=1}^{2\ell-1} 
   \sum_{n\not\in  \cI }  f_j(nt_o)
   \,  \big( \varphi_j^* \ast \widetilde{\varphi}_k\big)(\ell_m t_o-nt_o),\nonumber 
\end{align}
for $1\le k\le 2\ell-1,$ $1\le m\le N.$ 
 Next, we write $$\x_k=\big(x_k(1),\ldots,x_k(N)\big),\qquad \b_k=\big(b_k(1),\ldots,b_k(N)\big)$$ and 
 $$X=(\x_1,\ldots,\x_{2\ell-1}),\qquad B=(\b_1,\ldots,\b_{2\ell-1}).$$
We introduce the block matrix   
\begin{equation}\label{t:matrix}
S=\begin{bmatrix} S_{11}&S_{12}&\cdots&S_{1(2\ell-1)} \\ S_{21}&S_{22}&\cdots&S_{2(2\ell-1)} \\ \vdots&\vdots&\ddots&\vdots \\ S_{(2\ell-1)1}&S_{(2\ell-1)2}&\cdots& S_{(2\ell-1)(2\ell-1)} \end{bmatrix}
\end{equation}
where the  $S_{k j},$ $  1\le j,k\le 2\ell-1,$  are  the submatrices whose entries are     
\begin{equation}\label{t:block}
S_{k j}(m,p)=  \big( \varphi_j^* \ast \widetilde{\varphi}_k\big)(\ell_m t_o-\ell_p t_o)
\hskip1truecm    m,p=1,\dots,N.
 \end{equation} 
Thus    the equations in (\ref{t:system}) can be written
\begin{equation}\label{t:matrixeq}(I-S){X} =B 
\end{equation}
where    $X$ and $B$ are  vectors in $\C^{N(2\ell-1)}$ and $I$ is the $N(2\ell-1)\times N(2\ell-1)$ identity matrix.
Thus the missing samples can be recovered if equation (\ref{t:matrixeq})  can be solved  for all $B$, and this happens  if and only if   $1$ is not an eigenvalue of $S$. \\
We shall prove that  if 1 is an eigenvalue of $S$, then  the components of the vector $W$  are $\PI$-dependent on $K$; thereby contradicting the assumptions. Consider the quadratic form $ {X}^*X-X^*SX$, where   $\ {}^{*}$ denotes    conjugate transpose.
We claim  that  for all $X\in \C^{N(2\ell-1)}$ 
\begin{equation}\label{t:integral}
{X}^*X-X^*SX=
 \frac{1}{h}\int_{K}\Big\|  \Big(  I_{2\ell-1}\,-\,  \Gbfia(t)\Big)
  {\widehat{X}} (t)
 \Big\|_{\C^{2\ell-1}}^2 
 \ dt,
\end{equation}
 where $ {\widehat{X}}$ is given by (\ref{Xhat2})  with $L=2\ell-1.$ 
Indeed,  by   (\ref{braketcoeff}), the  $(m,p) $-entry  of the matrix $S_{k j}$ in (\ref{t:block}) is
$$ S_{k j}(m,p)
= \frac{1}{h}\int_{0}^{h} [\widehat{ \varphi}_j^*,  \widehat{\varphi} _k](t)\,e^{i(\ell_m-\ell_p)t_o \,t}\, dt \hskip1truecm m,p=1,\ldots,N.
$$
  Hence  
\begin{align} 
 {X}^*SX  =  &\ \sum_{j,k=1}^{2\ell-1} \sum_{m,p=1}^N\, \overline{x}_j(m)\, S_{j,k}(m,p)\, {x}_k(p)\nonumber\\ 
=&   \frac{1}{h}
\sum_{j,k=1}^{2\ell-1} \sum_{m,p=1}^N\, \overline{x}_j(m) \,\int_0^h 
 [\widehat{ \varphi}_j^*,  \widehat{\varphi} _k](t)\,e^{i(\ell_m-\ell_p)t_o \,t}\, dt \ {x}_k(p)\  \nonumber
 \\ =&
  \frac{1}{h}\int_0^h 
 \sum_{j,k=1}^{2\ell-1} \sum_{m=1}^N\, \overline{x}_j(m)e^{i\ell_m t_o \,t}  \,
   [\widehat{ \varphi}_j^*,  \widehat{\varphi} _k](t)\, \sum_{p=1}^N\,\ {x}_k(p)\ e^{-i\ell_p t_o \,t} dt
    \nonumber\\
    =& 
    \frac{1}{h}\int_{0}^h   
\widehat{X}^*(t) \, \Gfia(t) {\widehat{X}}(t)\ dt.\nonumber
\end{align}
The last identity follows by (\ref{gramianstar}).
Thus, by  the Parseval identity 
$$ {X}^* X=\frac{1}{h}\int_0^h \,  {\widehat{X}}^* (t)
{\widehat{X}} (t)\, dt$$
and the fact  that $\Gfia(t)$ is an orthogonal projection  for  almost every  $t\in[0,h],$
 we obtain
\begin{align}\nonumber%\label{t:integral1}
{X}^* X-{X}^*S\,X=&
 \frac{1}{h}\int_{0}^h    {\widehat{X}}^* (t)\Big(I_{2\ell-1}- \Gfia(t) \Big)\, {\widehat{X}}(t) \ dt\nonumber \\
 = &
 \frac{1}{h}\int_{0}^h   {\widehat{X}^*} (t)\Big(I_{2\ell-1}- \Gfia(t) \Big)^2\, {\widehat{X}}(t) \ dt \nonumber \\
 =&  \frac{1}{h}\int_{0}^h \Big\|
  \big(I_{2\ell-1}- \Gfia(t) \big) \widehat{X}(t)
 \Big\|_{\C^{2\ell-1}}^2\, dt.\nonumber
\end{align}
Formula (\ref{t:integral}) follows by observing that,  by  (\ref{p:GI}),  the integrand is zero in \penalty-100000$K_{-}\cup K_{+}=[0,h]\setminus K$. \\
  Assume    that $1$ is an eigenvalue of the matrix $S$. Then, by (\ref{t:integral}), there is a vector   $X^o=(\x_1^o,\ldots,\x^o_{2\ell-1})$ in  $\C^{N(2\ell-1)}$, $ X^o\not=0$, such that
\begin{equation}\label{t:final}
 \big( I_{2\ell-1}\,-\,  \Gbfia(t)\big) {\widehat{X}}^o (t)=0 \hskip1truecm    {\rm for \ a.e.\ } t   \in  \, K.
\end{equation}
Since $I_{2\ell-1}\,-\,  \Gbfia=P_{{}_W} $ a.e.  in $K,$  by  
 Proposition \ref{p:gfifistarK}, the vectors  
   $ {\widehat{X}}^o (t)$ and $ {W(t)}$ are orthogonal for a.e. $t$ in $K$,  i.e.   $$\sum_{j=1}^{2\ell-1} \overline{\what{\x_j^o}}(t)W_j(t)  =0 \hskip 1truecm {\rm  for \ a.e.} \ t \ {\rm    in}\ K.$$
   Since     $\what{\x_j ^o} (t)\in \PI$ , $1\le j\le 2\ell-1$,  this implies  that the components of $W$ are  $\PI$-dependent.\end{proof}
 \noindent {\it Remark.} % \label{remarkuno}
 If the matrix $S$ is Hermitian, the condition in the previous theorem is also necessary. Indeed,  assume that  the components of $W$ are $\PI$-linearly  dependent on the interval $K.$
Then   %(\ref{d:lineari})
there exists a non-null vector $X^o=(\x^o_j,\x^o_2,\dots \x^o_{(2\ell-1)})$ in  $ \C^{N(2\ell-1)}$   such that  ${\widehat{X}}^o$ is orthogonal to $W$ in $K$. Thus $P_{{}_W} \what{X}^o=0$; 
since by (\ref{p:GIP}), $P_{{}_W}=I_{2\ell-1}\,-\,  \Gbfia$  the identities
(\ref{t:integral}) and (\ref{t:final})  imply that 
 $ {X^o}^*X^o-{X^o}^*SX^o=0.$ Hence, since the matrix $I-S$ is Hermitian and  the associated quadratic form is positive semidefinite,
  $ X^o - SX^o =0$, i.e. 1 is an eigenvalue of $S$.
 \begin{theorem} \label{t:recontructI}
 Suppose that  the assumptions   of Theorem \ref{maingen}  are satisfied with $h\not= {\omega}/{\ell}$ and let % \penalty-10000
 $  \cI  \subset \Z$ be a finite assigned set.  If  the components of the vector $W$ are 
$\PI$-independent on $I_{-}\cup I_{+}$, then it is possible to recover the samples $f_j(nt_o)$,
  \penalty-10000
 $n\in {\cI},\,  1\le j\le 2\ell. $
  \end{theorem}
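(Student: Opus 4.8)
The plan is to mirror exactly the argument used in the proof of Theorem~\ref{t:recontructK}, now working in the case $L=2\ell$ under the assumptions of Theorem~\ref{maingen}, and invoking Proposition~\ref{p:gfifistar} in place of Proposition~\ref{p:gfifistarK}. First I would convolve the synthesis formula (\ref{expansion2}), with $L=2\ell$, against $\widetilde{\varphi}_k$ and evaluate at $\ell_m t_o$ for $\ell_m\in\cI$; isolating on the left the finitely many terms involving the missing samples $f_j(\ell_p t_o)$ yields a linear system of $N(2\ell)$ equations in the $N(2\ell)$ unknowns $x_k(m)=f_k(\ell_m t_o)$. Writing the kernel entries $S_{kj}(m,p)=(\varphi_j^*\ast\widetilde{\varphi}_k)(\ell_m t_o-\ell_p t_o)$ and assembling them into the block matrix $S$ exactly as in (\ref{t:matrix})--(\ref{t:block}), the system becomes $(I-S)X=B$ with $I$ the $N(2\ell)\times N(2\ell)$ identity. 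Hence the missing samples are recoverable for every $f\in\BL$ if and only if $1$ is not an eigenvalue of $S$.

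Next I would establish the quadratic-form identity analogous to (\ref{t:integral}), namely
\begin{equation}\nonumber
X^*X-X^*SX=\frac{1}{h}\int_{I_-\cup I_+}\Big\|\big(I_{2\ell}-\Gbfia(t)\big)\widehat{X}(t)\Big\|_{\C^{2\ell}}^2\,dt,
\end{equation}
where $\widehat{X}$ is given by (\ref{Xhat2}) with $L=2\ell$. The proof is verbatim the same: by (\ref{braketcoeff}) one has $S_{kj}(m,p)=\frac{1}{h}\int_0^h[\widehat{\varphi}_j^*,\widehat{\varphi}_k](t)\,e^{i(\ell_m-\ell_p)t_o t}\,dt$, so by (\ref{gramianstar}) $X^*SX=\frac{1}{h}\int_0^h\widehat{X}^*(t)\,\Gfia(t)\,\widehat{X}(t)\,dt$; combining with the Parseval identity $X^*X=\frac{1}{h}\int_0^h\widehat{X}^*(t)\widehat{X}(t)\,dt$ and using that $\Gfia(t)$ is an orthogonal projection a.e. (so $(I_{2\ell}-\Gfia)=(I_{2\ell}-\Gfia)^2$) gives the integrand $\|(I_{2\ell}-\Gfia(t))\widehat{X}(t)\|^2$ on $[0,h]$. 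By (\ref{p:GII}) this integrand vanishes a.e. on $I=[0,h]\setminus(I_-\cup I_+)$, which is precisely where the localization to $I_-\cup I_+$ comes from; this is the one place where the proof differs from Theorem~\ref{t:recontructK}, since there the pre-Gramian drops rank on the middle interval $K$ whereas here it drops rank on the two side intervals $I_-\cup I_+$.

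Finally, assume $1$ is an eigenvalue of $S$. Then there is $X^o\in\C^{N(2\ell)}$, $X^o\neq 0$, with $\big(I_{2\ell}-\Gbfia(t)\big)\widehat{X}^o(t)=0$ for a.e. $t\in I_-\cup I_+$. By Proposition~\ref{p:gfifistar}, $I_{2\ell}-\Gbfia=P_{{}_W}$ a.e. on $I_-\cup I_+$ (with the appropriate expression for $W$ on $I_-$ and on $I_+$ recalled before Proposition~\ref{p:gfifistarK}), so $\widehat{X}^o(t)\perp W(t)$, i.e. $\sum_{j=1}^{2\ell}\overline{\widehat{\x_j^o}}(t)\,W_j(t)=0$ a.e. on $I_-\cup I_+$ with each $\widehat{\x_j^o}\in\PI$. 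This exhibits a $\PI$-linear dependence of the components of $W$ on $I_-\cup I_+$, contradicting the hypothesis; hence $1$ is not an eigenvalue of $S$ and the samples $f_j(nt_o)$, $n\in\cI$, $1\le j\le 2\ell$, can be recovered. I do not expect a genuine obstacle here: the only subtlety is bookkeeping the dimension $2\ell$ and the two-piece description of $W$ on $I_-\cup I_+$, together with checking that the block-matrix manipulation and the Parseval/projection argument go through unchanged; everything else is a transcription of the proof of Theorem~\ref{t:recontructK} with $K$ replaced by $I_-\cup I_+$ and $2\ell-1$ replaced by $2\ell$. As in the remark following Theorem~\ref{t:recontructK}, if $S$ happens to be Hermitian the condition is also necessary, by the same positive-semidefiniteness argument applied to $I-S$.
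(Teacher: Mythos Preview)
Your proposal is correct and follows essentially the same approach as the paper's own proof: set up the block system $(I-S)X=B$ with $2\ell$ in place of $2\ell-1$, derive the quadratic-form identity by the Parseval/projection argument, localize to $I_-\cup I_+$ via (\ref{p:GII}), and conclude using $I_{2\ell}-\Gbfia=P_{{}_W}$ from (\ref{p:GIPI}). The paper's proof is in fact just a terse reference back to the proof of Theorem~\ref{t:recontructK} with exactly the substitutions you describe.
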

 \begin{proof}
 Let $  \cI=\{\ell_1,\ell_2,\dots,\ell_N\}$.  As in the proof of the previous theorem, we write the system as in (\ref{t:matrixeq}),  where 
 the block matrix $S$ is now   
\begin{equation}\nonumber%\label{t:matrixpari}
S=\begin{bmatrix} S_{1\, 1}&S_{1\,2}&\cdots&S_{1\,2\ell
} \\ S_{2\,1}&S_{2\,2}&\cdots&S_{2\,2\ell
} \\ \vdots&\vdots&\ddots&\vdots \\ S_{2\ell
1}&S_{2\ell
\,2}&\cdots& S_{2\ell\,
2\ell
} \end{bmatrix}
\end{equation}
and the entries of the submatrices $S_{k,j}$ 
 $  k,j=1,\dots,2\ell$ are as  in  (\ref{t:block}).   
By arguing  as in the proof of (\ref{t:integral})  and  using (\ref{p:GIPI})  instead of (\ref{p:GIP}),  we get 
\begin{equation} \nonumber%\label{t:integralI}
{X}^* X-X^*SX=
 \frac{1}{h}\int_{I_{-}\cup I_{+}}\Big\| 
 \Big(  I_{2\ell}\,-\,  \Gbfia(t)\Big)
  {\widehat{X}}(t)
 \Big\|_{\C^{2\ell}}^2 
 \ dt 
\end{equation}
 for all $X\in \C^{N\,2\ell}$.
The conclusion follows by   the same arguments of the  previous theorem.
 \end{proof}
 Next we  consider the case where the   samples to  reconstruct  concern   a number  $\ng$ of generators less then the length of the space $\BL$. In Corollaries  \ref{c:K} and \ref{c:I}   below  we find  conditions  such  that  reconstruction
 is possible.   By 
renaming the  generators,  we can assume that the missing samples   are relative to the first $\ng$ generators.
\begin{corollary}\label{c:K} Assume that  the assumptions   of Theorem \ref{maingendue} are satisfied with 
$h\not= \omega/(\ell-1/2)$. Let 
 $  \cI   \subset \Z$ be   a finite set   and let $\ng$ be such that $1\le \ng <  2\ell-1$.   If  the components of index $1,2,\dots,\ng$ of the vector $W$ are 
$\PI$-independent on $K,$  
  then it is possible to recover the samples $f_j(nt_o)$,\,
   $ n\in \cI,\,  1\le j\le \ng. $
  \end{corollary}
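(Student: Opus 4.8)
The plan is to reduce Corollary \ref{c:K} to Theorem \ref{t:recontructK} by a dimension-count / zero-padding argument rather than re-running the whole derivation. Concretely: the missing samples now concern only the generators $\varphi_1,\dots,\varphi_\ng$, so we must recover the unknowns $f_k(\ell_m t_o)$ for $1\le k\le\ng$, $1\le m\le N$. I would repeat the first steps of the proof of Theorem \ref{t:recontructK}: convolve the synthesis formula (\ref{expansion2}) with $\widetilde\varphi_k$ for $1\le k\le\ng$ only, evaluate at $\ell_m t_o$, and isolate on the left the terms containing the unknown samples. This produces a system $(I-S')X'=B'$, where $X'=(\x_1,\dots,\x_\ng)\in\C^{N\ng}$ and $S'$ is the $\ng\times\ng$ block submatrix of the matrix $S$ of (\ref{t:matrix}) consisting of the blocks $S_{kj}$ with $1\le k,j\le\ng$, the blocks being defined exactly as in (\ref{t:block}). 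As before, the samples can be recovered for every $f$ iff $1$ is not an eigenvalue of $S'$.

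Next I would establish the analogue of the key identity (\ref{t:integral}). Following verbatim the computation in the proof of Theorem \ref{t:recontructK}, using (\ref{braketcoeff}) and (\ref{gramianstar}), one gets for all $X'\in\C^{N\ng}$
\begin{equation}\nonumber
{X'}^* X' - {X'}^* S' X' = \frac{1}{h}\int_{0}^{h} \widehat{Y}^*(t)\,\big(I_{2\ell-1}-\Gfia(t)\big)\,\widehat{Y}(t)\,dt,
\end{equation}
where $Y=(\x_1,\dots,\x_\ng,0,\dots,0)\in\C^{N(2\ell-1)}$ is the zero-padded vector and $\widehat{Y}$ is given by (\ref{Xhat2}) with $L=2\ell-1$; this works because the quadratic form on the right only ever pairs $\widehat{Y}$ against itself through $\Gfia$, and the padded zero entries kill precisely the blocks $S_{kj}$ with $k>\ng$ or $j>\ng$. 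Since $\Gfia(t)$ is an orthogonal projection a.e., the right-hand side equals $\frac1h\int_0^h\|(I_{2\ell-1}-\Gfia(t))\widehat{Y}(t)\|^2\,dt$, which by (\ref{p:GI}) is supported in $K$ and, by (\ref{p:GIP}) and Proposition \ref{p:gfifistarK}, equals $\frac1h\int_K |\langle \widehat{Y}(t),W(t)\rangle|^2\,dt = \frac1h\int_K\big|\sum_{j=1}^{\ng}\overline{\widehat{\x_j}}(t)\,W_j(t)\big|^2\,dt$, since the last $2\ell-1-\ng$ components of $\widehat Y$ vanish.

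Finally I would argue by contradiction exactly as in Theorem \ref{t:recontructK}: if $1$ were an eigenvalue of $S'$ there would be a nonzero $X'^o=(\x_1^o,\dots,\x_\ng^o)$ with ${X'^o}^*X'^o-{X'^o}^*S'X'^o=0$, hence $\sum_{j=1}^{\ng}\overline{\widehat{\x_j^o}}(t)\,W_j(t)=0$ for a.e.\ $t\in K$ with the $\widehat{\x_j^o}\in\PI$ not all zero, which says precisely that the components $W_1,\dots,W_\ng$ of $W$ are $\PI$-dependent on $K$, contradicting the hypothesis. I do not expect a serious obstacle here; the only point requiring a little care is the bookkeeping in the zero-padding step — checking that deleting the rows/columns of $S$ with index $>\ng$ corresponds exactly to restricting the quadratic form $\widehat{X}^*\Gfia\widehat{X}$ to vectors $\widehat{X}$ whose last $2\ell-1-\ng$ entries are zero — and this is immediate from the block structure (\ref{t:matrix})–(\ref{t:block}) together with the formula $S_{kj}(m,p)=\frac1h\int_0^h[\widehat\varphi_j^*,\widehat\varphi_k](t)e^{i(\ell_m-\ell_p)t_o t}\,dt$.
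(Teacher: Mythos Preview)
Your proof is correct and follows essentially the same approach as the paper: the paper also passes to the block submatrix $S^{[\lambda]}$, introduces the zero-padded vector via an explicit projection $Q$ on $\C^{N(2\ell-1)}$ (your $Y$ is exactly $QX_o$), and then applies the identity (\ref{t:integral}) with $X=QX_o$ to deduce the $\PI$-dependence of $W_1,\dots,W_\lambda$ on $K$. One small imprecision: $\|P_W\widehat Y\|^2=|\langle\widehat Y,W\rangle|^2/\|W\|^2$, not $|\langle\widehat Y,W\rangle|^2$, but since $\|W\|>0$ a.e.\ on $K$ this does not affect the argument.
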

  \begin{proof} 
Let  $  \cI=\{\ell_1,\ell_2,\dots,\ell_N\}$.
Starting  from  (\ref{t:system})  and moving to the rigth hand side  the terms containing  the known samples,     we obtain  
       \begin{align}\nonumber  
 f_k(\ell_m t_o)&-  \, 
    \sum_{j=1}^{\ng}
     \sum_{p=1 }^{N} f_j(\ell_p t_o)\,
   \big( \varphi_j^* \ast \widetilde{ \varphi}_k \big)(\ell_m t_o-\ell_p t_o) \nonumber %\\ {}  % \label{t:smallsystem}
     \\
   =&    \sum_{j=1}^{\ng} 
   \sum_{n\not\in  \cI }  f_j(nt_o)\,
   \big( \varphi_j^* \ast \widetilde{\varphi}_k\big)(\ell_m t_o-nt_o)   \nonumber \\
  +&    \sum_{j=\ng+1}^{2\ell-1} 
   \sum_{n\in \Z}   f_j(nt_o)\,
    \big( \varphi_j^* \ast \widetilde{ \varphi}_k \big)(\ell_m t_o-nt_o).   \nonumber 
   \end{align}  
Denoting  by $b_k(m)$ 
the right hand side we get
  \begin{equation}\label{eqlambda} 
   f_k (\ell_m t_o) -  \, 
    \sum_{j=1}^{\ng}
     \sum_{p=1 }^{N} f_j(\ell_p t_o)\,
     \big( \varphi_j^* \ast \widetilde{ \varphi}_k \big)(\ell_m t_o-\ell_p t_o) =b_k(m)
  \end{equation} 
$  1\le k\le \ng,$  $1\le m\le N.$
As in the  proof of Theorem \ref{t:recontructK} we  regard these equations as a system in the unknowns
 $ f_k(\ell_m t_o),$   $1\le m\le N, $
$  1\le k\le \ng.$
To write it in matrix form we denote the unknowns by $z_k(m)$ and  we write 
 $$ \z_k=\big(z_k(1),\ldots,z_k(N)\big) \qquad \b_k=\big(b_k(1),\ldots,b_k(N)\big)\quad 1\le k\le \ng$$
 and 
  $$Z=\big(\z_1,\ldots,\z_{\ng}\big) \qquad  B =\big(\b_1,\ldots,\b_{\ng} \big).$$
 Then the  equations  in (\ref{eqlambda})  can be written 
 \begin{equation}\label{c:small}
  \big(I -S^{[\ng]}\big)\, Z =B,
  \end{equation}%{t:matrix}
where $S^{[\ng]}$ is   the block matrix obtained from  the matrix $S $ in (\ref{t:matrix}) by eliminating the blocks $S_{j,k}$ with $j$ and $k$ greater then $\ng$,    $I$ is the $N \ng\times N\ng$ identity matrix, and    
$Z$ and $B$
 are in $\C^{N \lambda}$.   
It is possible to  recover the   missing samples   if  (\ref{c:small}) can be solved for all $B$  and this happens  if and only if the coefficient matrix $I-S^{[\ng]}$ is invertible, i.e. if and only if $1$ is not an eigenvalue of $S^{[\ng]}$.  As in the proof of  Theorem   \ref{t:recontructK}, 
 we shall show that, if 1 is an eigenvalue of  $S^{[\ng]},$ then the first $\lambda$ components of the vector $W$ are $\PI$-independent on $K.$\par
Denote by $ \Q$ the projection on $ \C^{N (2\ell-1)}$  mapping the vector $ X  = \big(\x_1 ,\ldots,\x_{2\ell-1)}\big)$ to
$   \Q
  X  = \big(\x_1,  \ldots,\x_{ \ng },0,\ldots,0\big).$ 
   We identify the operator $\Q$ with the matrix 
\begin{equation}\nonumber 
\begin{bmatrix} {\  } I_{\ng}&\vline {\ }0{\ } \\
\hline  
  {\ }0{\ }  &\vline{\ }0{\ }   \\  
   \end{bmatrix} 
\end{equation}
where $I_{\ng}$ is the $\ng\times \ng  $  identity matrix.   
We observe   that $1$ is an eigenvalue of   $S^{[\ng]}$  iff  it   is an eigenvalue of   
 $ \Q S   $ with eigenvector $X_o\in {\rm Ran}(\Q).$ 
\par  Next assume that   $1$ is an eigenvalue of the matrix
$ S^{[\ng]}$.
 Then  $1$ is an eigenvalue of $\Q
S$ and   there exists a non-null $X_o\in {\rm Ran}(\Q)$ such that  $  X_o - {{\Q }}\,S\, X_o  =0$;   i.e.  $Q X_o - {{\Q }}\,S\,Q X_o =0$. Hence 
$ {  X_o  }^*\,  Q X_o-  X_o^* \,\Q\,S\, Q X_o =0. $
 This can be written
\begin{equation}\nonumber%\label{c:XX-XSX}
 {(\Q X_o)}^*  \Q X_o -   {(\Q X_o)}^*   S\, \Q X_o  =0.
\end{equation}
By applying (\ref{t:integral}) with $X=\Q X_o$  we get  
\begin{equation}\nonumber %\label{c:ImenoG}
\big(I_{2\ell-1}\,-\,  \Gbfia  \big)(\widehat{
\Q X_o } ) =0\hskip 1truecm {\rm a.e.\,  in }\  K. 
\end{equation}
By   (\ref{p:GIP}) 
   it follows that 
  $P_W{\widehat{(\Q X_o  ^T}})=0,$
 i.e.    $ \widehat{\Q  X_o}(t)=\Q \widehat{ X_o}(t)$ is orthogonal to $W(t)$  for a.\,e. $t$ in $K.$ This shows that 
  $$\sum_{j=1}^{\ng} \overline{\what{\x_o}}_j  (t)W_j(t)  =0  \hskip 1truecm {\rm  for \ a.e.} \ t \ {\rm    in}\ K.$$
We have thus proved that   if $1$ is an eigenvalue of $S^{[\ng]},$  then 
  the first $\ng$ components of $W$ are $\PI$-dependent on  $K.$ 
   \end{proof} 
        \begin{corollary}\label{c:I} Suppose that  the assumptions   of Theorem \ref{maingen} are satisfied with $h\not= {\omega}/{\ell}$.  Let $\cI$
  be an assigned  finite set  and  $\ng$ be such that $1\le \ng <  2\ell$. If  the components of index $1,2,\dots,\ng$ of the vector $W$ are 
$\PI$-independent on  the interval $I_{-}\cup I_{+}$,  then it is possible to recover the  samples   $f_j(nt_o)$   
  $ n\in \cI ,\quad   1\le j\le \ng.$ 
 \end{corollary}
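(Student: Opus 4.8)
My plan is to follow the proof of Corollary \ref{c:K} almost verbatim, interchanging the roles of Theorem \ref{maingendue}, the interval $K$ and Proposition \ref{p:gfifistarK} with those of Theorem \ref{maingen}, the set $I_{-}\cup I_{+}$ and Proposition \ref{p:gfifistar}; here $L=\leng{\BL}{t_o}=2\ell$. Writing $\cI=\{\ell_1,\dots,\ell_N\}$ and assuming, after renaming, that the missing samples are those of the first $\ng$ generators, I would convolve both sides of the synthesis formula (\ref{expansion2}) with $\widetilde\varphi_k$ and evaluate at $\ell_m t_o$, obtaining as in (\ref{t:system}) an identity for $f_k(\ell_m t_o)$ in terms of all the samples $f_j(nt_o)$. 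Keeping on the left only the unknowns $f_j(\ell_p t_o)$ with $1\le j\le\ng$ and sending to the right everything known — the samples $f_j(nt_o)$, $n\notin\cI$, $1\le j\le\ng$, together with all samples $f_j(nt_o)$, $\ng<j\le2\ell$ — gives, exactly as in (\ref{eqlambda}), a linear system
\[
\bigl(I-S^{[\ng]}\bigr)Z=B ,
\]
in the $N\ng$ unknowns $z_k(m)=f_k(\ell_m t_o)$, where $Z,B\in\C^{N\ng}$ and $S^{[\ng]}$ is obtained from the $2\ell\times2\ell$ block matrix $S$ of Theorem \ref{t:recontructI} (with blocks $S_{kj}$ as in (\ref{t:block})) by deleting the blocks $S_{jk}$ with $j,k>\ng$. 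Recovery is then possible for every $f\in\BL$ if and only if this system is solvable for every right-hand side, i.e. if and only if $1$ is not an eigenvalue of $S^{[\ng]}$.

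Next I would transfer the eigenvalue question to the full matrix $S$, as in Corollary \ref{c:K}. Let $\Q$ be the orthogonal projection of $\C^{N(2\ell)}$ onto the subspace of vectors $(\x_1,\dots,\x_{2\ell})$ with $\x_{\ng+1}=\dots=\x_{2\ell}=0$, identified with the block matrix having $I_\ng$ in the upper-left corner and zeros elsewhere; then $1$ is an eigenvalue of $S^{[\ng]}$ if and only if it is an eigenvalue of $\Q S$ with an eigenvector $X_o\in{\rm Ran}(\Q)$. Supposing this is the case, from $\Q X_o-\Q S\Q X_o=0$ and pairing with $X_o$ one gets $(\Q X_o)^*\Q X_o-(\Q X_o)^*S\,\Q X_o=0$. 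I would then invoke the quadratic-form identity established in the proof of Theorem \ref{t:recontructI},
\[
X^*X-X^*SX=\frac1h\int_{I_{-}\cup I_{+}}\bigl\|\bigl(I_{2\ell}-\Gbfia(t)\bigr)\widehat X(t)\bigr\|_{\C^{2\ell}}^2\,dt ,
\]
with $X=\Q X_o$: the left-hand side vanishes, forcing $\bigl(I_{2\ell}-\Gbfia(t)\bigr)\,\widehat{\Q X_o}(t)=0$ for a.e.\ $t\in I_{-}\cup I_{+}$.

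To finish, I would use (\ref{p:GIPI}): since $I_{2\ell}-\Gbfia=P_{{}_W}$ a.e.\ on $I_{-}\cup I_{+}$, the last equation says that $\widehat{\Q X_o}(t)=\Q\,\widehat{X_o}(t)$ is orthogonal to $W(t)$ for a.e.\ $t\in I_{-}\cup I_{+}$. Writing $X_o=(\x^o_1,\dots,\x^o_{2\ell})$ with $\x^o_j\in\C^N$, this is
\[
\sum_{j=1}^{\ng}\overline{\what{\x^o_j}}(t)\,W_j(t)=0 \qquad \text{for a.e. } t\in I_{-}\cup I_{+} ,
\]
where each $\what{\x^o_j}$ lies in $\PI$ and the polynomials $\what{\x^o_1},\dots,\what{\x^o_\ng}$ are not all zero because $X_o\in{\rm Ran}(\Q)$ is non-null. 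Thus the first $\ng$ components of $W$ would be $\PI$-dependent on $I_{-}\cup I_{+}$, contradicting the hypothesis; hence $1$ is not an eigenvalue of $S^{[\ng]}$ and the samples $f_j(nt_o)$, $n\in\cI$, $1\le j\le\ng$, can be recovered.

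I do not expect a real obstacle, since all the analytic ingredients — the block-matrix reformulation of the recovery problem, the quadratic-form identity, and the description of $\Gbfia$ as $I_{2\ell}-P_{{}_W}$ — are already available from Theorems \ref{maingen} and \ref{t:recontructI} and Proposition \ref{p:gfifistar}; the argument is essentially bookkeeping. The one point worth a moment's care is that, under the hypotheses of Theorem \ref{maingen}, the vector $W$ is given by two different cross products on $I_{-}$ and on $I_{+}$ (as recorded just before Proposition \ref{p:gfifistarK}), so that ``$\PI$-dependence on $I_{-}\cup I_{+}$'' must be understood as the existence of a single tuple of polynomials that annihilates the relevant components of $W$ on both pieces simultaneously — which is exactly the relation produced above, $\widehat{\Q X_o}$ being one fixed vector of trigonometric polynomials.
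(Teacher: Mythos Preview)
Your proposal is correct and follows exactly the approach the paper has in mind: the paper itself omits the proof of Corollary~\ref{c:I}, simply stating that it is quite similar to that of Corollary~\ref{c:K}, and your write-up is precisely that adaptation, replacing $2\ell-1$ by $2\ell$, $K$ by $I_-\cup I_+$, and invoking Proposition~\ref{p:gfifistar} and the quadratic-form identity from Theorem~\ref{t:recontructI} in place of their counterparts. Your closing remark about $W$ being given by different cross products on $I_-$ and $I_+$ is a fair point of care, and your reading of the $\PI$-dependence condition is consistent with how the paper uses it.
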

\noindent  We omit the proof  since it is quite similar  to that of  Corollary  \ref{c:K}.

 \section{Recovery of missing samples in the derivative   sampling}
\label{s:Missing}
This section is dedicated to 
derivative sampling and to 
the recovery of missing samples in the 
corresponding
reconstruction formulas.
Fix an   integer $L>1$ and let $\Phi_L=(\varphi_1,\ldots,\varphi_L)$ be defined by 
  \begin{equation}\label{Deriv:frameL} 
 \what{\varphi_1} =\chi_{[-\omega,\omega]},  \hskip 1truecm   \what{\varphi_2}=i x   \chi_{[-\omega,\omega]},\quad   \ldots,\what{\varphi}_{L}= (i x)^{L-1}    \chi_{[-\omega,\omega]}. 
  \end{equation}
 It is well known that if  $h= {2\omega}/{L}$, i.e. if $t_o={\pi L}/{
\omega}$, the family  $\EfiL$   
is a Riesz basis for $\BL$   and in the  reconstruction formula  for a function $f\in \BL$    the coefficients are  the values of $f$ and its  first $L-1$ derivatives at the sample points
(see for example \cite{HS}, \cite{Hi}).\\
In the first part of this section  we show that  $\EfiL$  is a frame  for $\BL$ for any 
$h\in \left[ {2\omega}/{L}, {2\omega}/(L-1)\right)$
%$h\in \Big[\frac{2\omega}{L},\frac{2\omega}{L-1}\Big)$
 i.e. for any 
 $t_o\in \left((L-1) {\pi}/{\omega},L  {\pi}/{\omega}\right]$ (see Theorem \ref{t:derframes}).  We  need a lemma.  Denote by $M_n$ the $n\times(n+1)$ matrix 
\begin{equation}\label{t:Mmatrix}
M_n(x)= \begin{bmatrix}  
1&  \tau_{  h}(ix) &\tau_{  h}(ix)^2   &\dots   &  \tau_{  h}(ix)^{n}   
  \\
\vdots& \vdots&\vdots&{\ } & \vdots&\\
 \vdots &\vdots&\vdots&{\ }&\vdots&\\
1&  \tau_{n h}(ix) &\tau_{ nh}(ix)^2   &\dots   &  \tau_{n h}(ix)^{n}\\
\end{bmatrix} .
\end{equation}
\begin{lemma}\label{l:derjfi} Let $M_{n,r}$, $r=0,1,\dots,n$ be the submatrix of  $M_n$  obtained by suppressing the $r$-th column. Then   the  determinant  of $M_{n,r}$    is a polynomial  of degree  $n-r$.
\end{lemma}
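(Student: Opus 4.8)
The plan is to evaluate $\det M_{n,r}$ explicitly by recognizing it as a (column-scaled) Vandermonde-type determinant in the variables $\tau_{jh}(ix) = i(x+jh)$, $j=1,\dots,n$. Write $y_j = i(x+jh)$ for $j=1,\dots,n$; these are $n$ distinct affine functions of $x$ (distinct because the $jh$ are distinct). The matrix $M_n$ has rows indexed by $j=1,\dots,n$ and columns indexed by the exponents $0,1,\dots,n$, with $(M_n)_{j,k} = y_j^{k}$. Deleting the $r$-th column leaves the $n\times n$ matrix whose columns carry the exponents $\{0,1,\dots,n\}\setminus\{r\}$; its determinant is, up to sign, the \emph{generalized Vandermonde} determinant in $y_1,\dots,y_n$ associated to that set of exponents.

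The key classical fact I would invoke (or prove in one line by the standard Vandermonde argument) is the Jacobi bialternant identity: for exponents $0\le r$ omitted from $\{0,\dots,n\}$, one has
\begin{equation}\nonumber
\det\big( y_j^{k} \big)_{\substack{1\le j\le n\\ k\in\{0,\dots,n\}\setminus\{r\}}}
= e_{n-r}(y_1,\dots,y_n)\ \prod_{1\le i<j\le n}(y_j-y_i),
\end{equation}
where $e_{n-r}$ is the elementary symmetric polynomial of degree $n-r$ (this is the Schur polynomial $s_\lambda$ for the partition $\lambda$ that is a single column of height $n-r$, times the Vandermonde). The Vandermonde factor $\prod_{i<j}(y_j-y_i)$ is a \emph{nonzero constant}: since $y_j - y_i = i(j-i)h$ depends only on the indices and not on $x$, it does not contribute to the degree in $x$. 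The factor $e_{n-r}(y_1,\dots,y_n)$ is a polynomial in the $y_j$ of degree $n-r$, hence a polynomial in $x$ of degree exactly $n-r$ — its leading term in $x$ is $e_{n-r}(i,\dots,i)\,x^{n-r} = \binom{n}{n-r} i^{n-r} x^{n-r}$, which is nonzero. This gives $\deg_x \det M_{n,r} = n-r$ as claimed.

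Concretely, the steps are: (1) substitute $y_j = i(x+jh)$ and record that the map $x\mapsto y_j$ is affine with nonzero leading coefficient; (2) identify $\det M_{n,r}$ with the generalized Vandermonde determinant for the exponent set $\{0,\dots,n\}\setminus\{r\}$; (3) factor it as $e_{n-r}(y_1,\dots,y_n)$ times the ordinary Vandermonde $\prod_{i<j}(y_j - y_i)$ via the bialternant/Schur identity, or directly by showing $y_j - y_i$ divides the determinant for each pair and comparing degrees; (4) observe the Vandermonde factor is an $x$-independent nonzero constant, so the $x$-degree comes entirely from $e_{n-r}$, which is homogeneous of degree $n-r$ in the $y_j$'s and thus of degree $n-r$ in $x$ with nonvanishing leading coefficient.

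The main obstacle — really the only nonroutine point — is establishing the factorization in step (3) cleanly. One can sidestep the full Schur-polynomial machinery: since $\det M_{n,r}$, as a polynomial in $y_1,\dots,y_n$, is alternating (swapping two rows negates it), it is divisible by $\prod_{i<j}(y_j - y_i)$; the quotient is a symmetric polynomial, and a degree count (total degree of the determinant is $\binom{n}{2} + (n-r)$ when $r\ge 1$, or $\binom{n}{2}$ when $r=0$, against $\binom{n}{2}$ for the Vandermonde) shows the quotient is homogeneous of degree $n-r$; identifying it as $e_{n-r}$ up to a constant then follows by inspecting one coefficient, but for the lemma we only need that the quotient is homogeneous of degree $n-r$ and not identically zero, which is immediate since the original determinant is not identically zero (the $y_j$ being distinct). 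Translating "degree $n-r$ in the $y_j$" to "degree $n-r$ in $x$" uses only that each $y_j$ is monic-affine in $x$ after scaling by $i$, so the top-degree part survives. I would also note the edge case $r=n$ (the omitted column is the last): then $M_{n,r}$ is the ordinary Vandermonde, $e_0=1$, and the degree is $0$, consistent with the formula.
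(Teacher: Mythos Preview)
Your main argument via the bialternant identity is correct and gives the result cleanly: with $y_j=i(x+jh)$, the determinant $\det M_{n,r}$ is, up to sign, $e_{n-r}(y_1,\dots,y_n)\prod_{p<q}(y_q-y_p)$; the Vandermonde factor equals the nonzero constant $(ih)^{\binom{n}{2}}\prod_{p=1}^{n-1}p!$, while $e_{n-r}(y_1,\dots,y_n)$ has leading term $\binom{n}{\,n-r\,}\,i^{\,n-r}x^{\,n-r}$ in $x$, so $\deg_x\det M_{n,r}=n-r$.

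This is a genuinely different route from the paper's. The paper never invokes symmetric functions; instead it establishes the differential recurrence
\[
(\det M_{n,r})' \;=\; i(r+1)\,\det M_{n,r+1},\qquad r=0,\dots,n-1,
\]
by differentiating column by column: the derivative of the column of exponent $k$ is $ik$ times the column of exponent $k-1$, so every term vanishes except the one with $k=r+1$, which produces exactly $M_{n,r+1}$. Since $\det M_{n,n}$ is the ordinary Vandermonde and hence a nonzero constant, downward induction on $r$ gives the degree. Your approach is more structural and hands you an explicit closed formula for $\det M_{n,r}$; the paper's is more elementary and self-contained, needing nothing beyond the product rule for determinants.

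One caveat on your proposed ``sidestep'' of the Schur identification in step~(3): knowing only that the quotient $Q$ is a nonzero symmetric homogeneous polynomial of degree $n-r$ in the $y_j$ does \emph{not} by itself yield degree $n-r$ in $x$ after the affine substitution $y_j=i(x+jh)$. The leading $x$-coefficient is $Q(i,\dots,i)=i^{\,n-r}Q(1,\dots,1)$, and a nonzero symmetric homogeneous polynomial can vanish on the diagonal (e.g.\ $(y_1-y_2)^2$ when $n=2$). So you really do need $Q(1,\dots,1)\neq 0$, which your main argument supplies via $Q=\pm e_{n-r}$ but the sidestep does not; the sentence ``the top-degree part survives'' is where the shortcut breaks.
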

\begin{proof}
First we observe that $ M_{n,n}$ is a Vandermonde matrix. Thus 
\begin{equation} \label{l:Vander}
{\rm det }M_{n,n}= \prod_{1\le k< j\le n } \big(\tau_{j h}(ix)-\tau_{k h}(ix) \big)=(ih)^{n }  \prod_{1\le k< j\le n }(j-k)=(ih)^{n }\prod_{p=1}^{n-1 }p!
\end{equation} 
is a constant. Next we 
  show that 
\begin{equation}\label{l:DerMmatrix}
({\rm det} M_{n,r})^\prime = i (r+1) \, {\rm det} M_{n,r+1}   \qquad r=0,\dots,n-1.
\end{equation}
We recall that  the derivative of the determinant of a $n\times n $ matrix
is the sum of $n$ determinants, each obtained by replacing  in the  matrix the elements of a column by their derivatives.
First we  prove   formula (\ref{l:DerMmatrix})    for $r=n-1$, i.e. for the matrix  
 \begin{equation}%\label{t:Mmatrix} 
M_{n,n-1}
=     \begin{bmatrix}  
1&  \tau_{  h}(ix) &\dots &\tau_{  h}(ix)^{n-2}      &  \tau_{  h}(ix)^{n}   
\\
\vdots& \vdots&\vdots&{\ } & \vdots&\\
 \vdots &\vdots&\vdots&{\ }&\vdots&\\
1&  \tau_{n h}(ix) &\dots &\tau_{ nh}(ix)^{n-2}      &   \tau_{n h}(ix)^{n }\\
\end{bmatrix}.  
\end{equation}
We observe that the derivative of the entries of the first column  is zero and that     for $j=1,2,\dots,n-2$    the  derivative of the $j$-th column is a multiple of the preceding column. Hence    the determinants of  the first $n-1$ matrices     
vanish.   It remains only the matrix obtained by derivating the last  column. Therefore  
\begin{equation}\nonumber%\label{t:Mmatrixder}
({\rm det} M_{n,n-1})^\prime = \begin{bmatrix} 
1&  \tau_{  h}(ix) &\dots &\tau_{  h}(ix)^{n-2}      & in \tau_{  h}(ix)^{n-1}   
\\
\vdots& \vdots&\vdots&{\ } & \vdots&\\
 \vdots &\vdots&\vdots&{\ }&\vdots&\\
1&  \tau_{n h}(ix) &\dots &\tau_{ nh}(ix)^{n-2}      &in  \tau_{n h}(ix)^{n-1}\\
\end{bmatrix} =\, i\, n\,  {\rm det }M_{n,n}.
\end{equation}
Hence  we have proved  formula (\ref{l:DerMmatrix}) for  $r=n-1$; 
the proof for  $r=0,1,\dots, n-2$ is similar. By (\ref{l:Vander}) and (\ref{l:DerMmatrix})    the lemma follows.
\end{proof}
\begin{theorem}\label{t:derframes} Let   $\Phi_L =(\varphi_1,\varphi_2,\ldots,\varphi_{L})$ be  as in 
 (\ref{Deriv:frameL}). If $t_o$ is  a positive number such that 
$ (L-1) {\pi}/{\omega}<t_o\le L  {\pi}/{\omega}$,
then $\EfiL$  is a frame for  $\BL$. If   $ t_o=L  {\pi}/{\omega} $ then $\EfiL$ is a Riesz basis for $\BL$.  
 \end{theorem}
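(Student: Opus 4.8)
The plan is to apply Theorem \ref{maingen} when $L$ is even and Theorem \ref{maingendue} when $L$ is odd. Since $h=2\pi/t_o$, the hypothesis $(L-1)\pi/\omega<t_o\le L\pi/\omega$ is equivalent to $h\in[\,2\omega/L,\,2\omega/(L-1)\,)$. First I would note that if $L=2\ell$ this interval is exactly $[\,\omega/\ell,\,\omega/(\ell-\tfrac12)\,)$, so by (\ref{lenBL}) the length of $\BL$ as a $t_o$-shift-invariant space is $2\ell=L$ and we are in the setting of Theorem \ref{maingen} with this $\ell$; while if $L=2\ell-1$ the interval is $[\,\omega/(\ell-\tfrac12),\,\omega/(\ell-1)\,)$, the length is $2\ell-1=L$, and (since $L>1$) $\ell\ge2$, so we are in the setting of Theorem \ref{maingendue}. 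In both cases $h=2\omega/L$, i.e. $t_o=L\pi/\omega$, is precisely the Riesz-basis endpoint of those theorems: there $I_-\cup I_+$ (resp.\ $K$) shrinks to the empty set, so the minor condition (\ref{maingen:2}) (resp.\ (\ref{maingendue:2})) becomes vacuous and only the density and determinant conditions are required.

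Next I would check the density condition (\ref{maingen:1})/(\ref{maingendue:1}). From (\ref{Deriv:frameL}) we have $\what{\varphi_p}(x)=(ix)^{p-1}$ on $(-\omega,\omega)$, so $\sum_{p=1}^{L}|\what{\varphi_p}(x)|^2=\sum_{p=0}^{L-1}|x|^{2p}$, which for a.e.\ $x\in(-\omega,\omega)$ lies between $1$ and $\sum_{p=0}^{L-1}\omega^{2p}$; hence the condition holds with $\delta=1$ and $\gamma=\sum_{p=0}^{L-1}\omega^{2p}$.

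The substance of the proof is the determinant and minor conditions, and the key point is that the pre-Gramian $\Jfi$ is a Vandermonde matrix. Reading off its columns from (\ref{colonne:0}) (even case) or (\ref{colonne:1}) (odd case): every non-vanishing entry of $\Jfi$ in row $j$ and column $p$ equals $\sqrt h\,\what{\varphi_p}(x+jh)=\sqrt h\,\big(i(x+jh)\big)^{p-1}$, so after pulling $\sqrt h$ out of each row and the unimodular scalar $i^{p-1}$ out of the $p$-th column, $\Jfi$ becomes the Vandermonde matrix in the nodes $x+jh$, with $j$ running over the relevant block of consecutive integers. On the middle interval $I$ (even case), resp.\ on $K_-\cup K_+$ (odd case), no row vanishes, $\Jfi$ is $L\times L$, and
\[
|\det\Jfi|=h^{L/2}\prod_{j<k}\big((x+kh)-(x+jh)\big)=h^{L/2}\prod_{j<k}\big((k-j)h\big),
\]
a strictly positive constant independent of $x$ (all node differences are positive integer multiples of $h$); this gives (\ref{maingen:3})/(\ref{maingendue:3}). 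On $I_-\cup I_+$ (even case), resp.\ on $K$ (odd case), exactly one row of $\Jfi$ vanishes; after deleting it I would delete the last column of the resulting $(L-1)\times L$ matrix, leaving a square Vandermonde matrix whose determinant is, by the same computation, a nonzero constant. Since $\Apen{L-1}$ is the sum of the squared absolute values of all minors of order $L-1$, it is bounded below by the square of this particular minor, which gives (\ref{maingen:2})/(\ref{maingendue:2}). All hypotheses of the relevant theorem being verified, $\EfiL$ is a frame for $\BL$; and at $h=2\omega/L$ the surviving conditions are still satisfied, so $\EfiL$ is a Riesz basis.

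The only real obstacle is bookkeeping: correctly pinning down $\ell$ and the length in each parity case, and keeping track of which row of $\Jfi$ vanishes on which subinterval, so that the Vandermonde identification is applied to the correct square submatrix. The underlying linear algebra — a Vandermonde determinant with nodes in arithmetic progression of step $h$ is a nonzero constant — is elementary; in particular only the instance of Lemma \ref{l:derjfi} given by formula (\ref{l:Vander}) (the case where the highest-power column is suppressed) is needed for this statement.
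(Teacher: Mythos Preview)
Your proposal is correct and follows essentially the same route as the paper: verify the hypotheses of Theorem~\ref{maingen} or Theorem~\ref{maingendue} according to parity, check the density bound directly from $\sum_{p=0}^{L-1}|x|^{2p}$, and exploit the Vandermonde structure of $\Jfi$ (nodes $x+jh$ in arithmetic progression of step $h$) to see that the full determinant is a nonzero constant on the ``good'' interval and that the minor obtained by deleting the last column is a nonzero constant on the ``bad'' interval. The paper gives only the odd case explicitly and packages the Vandermonde computation via the matrices $M_n$ and formula~(\ref{l:Vander}); as you note, only that instance of Lemma~\ref{l:derjfi} is needed here.
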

\begin{proof}     We shall  prove the theorem only in the case   $L$  odd. The proof in the case     $L$ even  is similar. If  $L=2\ell-1$, then     $ {\omega}/(\ell-\frac{1}{2})\le h< {\omega}/({\ell-1})$, since $h= {2\pi}/{t_o}$.  We show  that the assumptions in Theorem  \ref{maingendue} are all satisfied. Indeed, since 
 $\sum_{j=1}^{2\ell-1}|\what{\varphi}_j|^2=\sum_{j=0}^{2\ell-1}x^{2j}
$,  condition (\ref{maingendue:1}) is satisfied. Next we prove that   there exists a positive number $\sigma$ such that   (\ref{maingendue:2}) holds. By  (\ref{colonne:1}), in the interval $K$ the matrix  obtained by deleting the vanishing row in the pre-Gramian     is    the $(2\ell-2)\times(2\ell-1)$ matrix
\begin{equation} \label{t:Jmatrix0}
 \Jbfidual(x)= \sqrt{h}\,  \begin{bmatrix} 
1& \tau_{-(\ell-1)h}(ix)
  &\dots    & \tau_{-(\ell-1)h}(ix)^{2\ell-2}   
 \\
\vdots& \vdots
&{\ } & \vdots&\\
1& \tau_{(\ell-2)h}(ix) 
 &\dots   
 & \tau_{(\ell-2)h} (ix)^{2\ell-2}\\
\end{bmatrix}
\end{equation}
i.e.
\begin{equation}\label{t:Jmatrix}
 \Jbfidual (x)=\, \sqrt{h}\  \tau_{\ell h\,} M_{2\ell-2}(x).\end{equation}
 \vskip.1truecm \noindent
By (\ref{l:Vander})    
 the $(2\ell-2)\times(2\ell-2)$ minor of $ \Jbfidual$ 
  obtained by suppressing the  last column   is   
  $$ {h}^{\ell-1}{\rm det} M_{2\ell-2,2\ell-2}(x)=
  \ (-1)^{\ell-1}  h^{3\ell-3}\, 
 \prod_{p=1}^  {2\ell-2} p! \hskip1.2truecm \forall x\in\R.$$ 
We recall that   $ \Apen{2\ell-2} $ is the sum of the squares of the absolute values of the minors of order $2\ell-2$; hence     $$\Apen{2\ell-2}\ge \, h^{6(\ell-1)}\,  \Big( \prod_{p=1}^  {2\ell-2} p!\Big)^2>0  \qquad  {\rm in }\  K.$$
This  shows that (\ref{maingendue:2}) is satisfied.
Next we consider the  interval $K_{-}$; here by  (\ref{colonne:1}) and  (\ref{Deriv:frameL})
\begin{equation}\nonumber%\label{jfiM}
\Jfi =\sqrt{h}\ \ \tau_{-\ell h}\, M_{2\ell-1,2\ell-1}. 
\end{equation}
By (\ref{l:Vander})  its determinant is  
\begin{equation}\nonumber
{\rm  det} \Jfi(x)=
  (-1)^{\ell+1}\ i \, h^{3\ell-3/2}\, 
\prod_{p=1}^  {2\ell-1}
 {p!}  \hskip1truecm \forall x\in    K_{-}.
\end{equation}
The same formula holds    in    $K_{+}$.  This proves     (\ref{maingendue:3}). Thus the assumptions of     Theorem \ref{maingendue}  are satisfied.  This   concludes    the proof for the case $ {\omega}/{\ell}< h< {\omega}/(\ell-\frac{1}{2})$. 
If  $h= {\omega}/({\ell- {1}/{2}})$, then  the
  interval $K$ is empty. By arguing as before, one can  show that  (\ref{maingendue:1}) and (\ref{maingendue:3}) hold on the  intervals $K_{-}$ and $K_{+}.$ Hence, by Theorem  \ref{maingendue}, $\Efi$ is a  Riesz basis.    \end{proof} 
Denote by $ \Phi_L^*=(\varphi_1^*,\ldots,\varphi_L^*)$ the canonical dual generators of the frame $\EfiL$. By Theorem \ref{t:derframes}, every $f\in\BL$ can be expanded in terms
of the elements of the dual family as in (\ref{expansion2}).
By  
(\ref{Deriv:frameL})   the coefficients of this  sum are
$$f_j(nt_o)=\sqrt{2\pi}\   (-1)^{j-1 } f^{(j-1)}(nt_o) \qquad n\in \Z \qquad j=1,\ldots,L.$$
  Hence 
 \begin{equation}\label{DSformulaL}
f(x)= {\sqrt{2\pi}}\sum_{j=1}^L\sum_{n\in\Z} 
(-1)^{j-1} \, f^{(j-1)}(nt_o)\   \varphi_j^*(x-nt_o) \qquad \forall f\in\BL,
\end{equation}
where $L=2\ell\, $ if $\, {\omega}/{\ell}\le h< {\omega}/(\ell- {1}/{2})$  and $L=2\ell-1\,$ if $ \,{\omega}/(\ell- {1}/{2})\le h<  {\omega}/(\ell-1).$
We shall call (\ref{DSformulaL})   the {\it      derivative oversampling   formula  of order $ L-1$}.  
\par
     Proposition \ref{p:recder}  below shows that, for band-limited signals,   it is possible to recover any  finite set of  missing samples  in the derivative sampling expansion (\ref{DSformulaL}). Thus we generalize to arbitrary order the result of   
  Santos and  Ferreira   in  \cite{SF}.
We need a lemma. Denote 
   by $\partial P$ the degree of a polynomial  $P.$   
\begin{lemma}\label{l:dertre} Let  $P_i$,  $i=1,2,\dots,N$, be  
non-null polynomials such that 
 $$\partial P_1 >  \partial P_{2} >\ldots \ldots >  \partial P_N. $$
 If ${ \sum_{j=1}^N \gamma_j P_j=0}$  for some continuous  $h$-periodic functions $\gamma_i$, $i=1,2,\dots,N$,  
 then   $\gamma_i= 0$ for all $i=1,2,\dots,N.$
\end{lemma}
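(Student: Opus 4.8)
The plan is to argue by induction on $N$, peeling off the highest-degree term. For $N=1$ the statement is immediate: if $\gamma_1 P_1=0$ with $P_1$ a non-null polynomial, then $\gamma_1$ vanishes at every point that is not a zero of $P_1$, hence on a dense set, hence everywhere by continuity. For the inductive step, suppose the claim holds for $N-1$ and that $\sum_{j=1}^N \gamma_j P_j=0$ on $\R$ with $\partial P_1>\partial P_2>\cdots>\partial P_N$. The idea is to let $x\to+\infty$ along $\R$ and use that $P_1$ dominates all the other polynomials in absolute value. Dividing the identity by $P_1(x)$ (legitimate for $|x|$ large, since $P_1$ has only finitely many real zeros) gives
\begin{equation}\nonumber
\gamma_1(x)=-\sum_{j=2}^N \gamma_j(x)\,\frac{P_j(x)}{P_1(x)},
\end{equation}
and each ratio $P_j(x)/P_1(x)\to 0$ as $|x|\to\infty$ because $\partial P_j<\partial P_1$.

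The key point now is that $\gamma_1$ is $h$-periodic and continuous, hence bounded; and the functions $\gamma_j$, $j\ge 2$, are likewise bounded. Therefore the right-hand side tends to $0$ as $x\to+\infty$, so $\gamma_1(x)\to 0$. But a periodic continuous function whose values tend to $0$ along a sequence $x_k\to+\infty$ with $x_k$ taken $\bmod\ h$ equidistributed (for instance $x_k=x_0+kh$ for each fixed $x_0\in[0,h]$) must be identically zero: for any $t\in[0,h]$, $\gamma_1(t)=\gamma_1(t+kh)\to 0$, so $\gamma_1(t)=0$. Hence $\gamma_1\equiv 0$.

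Once $\gamma_1\equiv 0$, the identity reduces to $\sum_{j=2}^N \gamma_j P_j=0$ with $\partial P_2>\cdots>\partial P_N$, and the inductive hypothesis gives $\gamma_j\equiv 0$ for $j=2,\dots,N$. This completes the induction and proves the lemma.

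The main obstacle — really the only subtlety — is making the limiting argument interact correctly with periodicity: one cannot simply say "$\gamma_1(x)\to 0$ as $x\to\infty$ forces $\gamma_1=0$" without invoking periodicity, since a generic continuous function vanishing at infinity need not vanish. The clean way is the one above: fix $t\in[0,h]$, evaluate the limit along the arithmetic progression $t+kh$, and use $\gamma_1(t+kh)=\gamma_1(t)$ together with $P_j(t+kh)/P_1(t+kh)\to 0$ and the boundedness of all the $\gamma_j$ on $[0,h]$. Everything else is routine, and the degree strict-inequality hypothesis is exactly what guarantees each ratio decays and that the induction can proceed after removing the top term.
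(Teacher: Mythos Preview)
Your proof is correct and follows essentially the same approach as the paper: induction on $N$, peeling off the top-degree term by dividing through and letting $x\to\infty$ so that the lower-degree ratios vanish, then invoking periodicity to conclude $\gamma_1\equiv 0$. The only cosmetic difference is that the paper divides by $x^{\partial P_1}$ rather than by $P_1(x)$; your treatment of the periodicity step (evaluating along $t+kh$) is in fact more carefully spelled out than the paper's, which simply asserts that the limit forces $\gamma_1$ to vanish identically.
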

   \begin{proof}
   The case $N=1$ is obvious; assume  by induction that the statement  is true for  $N$  polynomials. Let  $P_i$,  $i=1,2,\dots,N+1,$ be  
non-null polynomials such that 
 $$\partial P_{ 1} >  \partial P_{2} >\ldots \ldots >  \partial P_{N }>\partial P_{N+1}$$
   and 
   $$ \gamma_{1} P_{1}+ \gamma_{2} P_{2} +\ldots \ldots +   \gamma_{N} P_{N}+ \gamma_{N+1} P_{N+1}=0.$$   Denote  by $q$ be the  degree of $P_{1}$. By  dividing by $x^q$ and  taking the limit  to infinity, we  obtain  that $\gamma_{1} $ is   identically zero. Hence ${ \sum_{j=2}^{N+1} \gamma_j P_j=0}$. The conclusion follows, by the inductive assumption.   \end{proof}
 \begin{proposition}\label{p:recder}  Let $\Phi_L =(\varphi_1,\varphi_2,\ldots,\varphi_{L})$ the frame generators defined by    
(\ref{Deriv:frameL}). If  
$ (L-1) {\pi}/{\omega}<t_o< L \ {\pi}/{\omega},$
then in the  sampling  formula (\ref{DSformulaL}) it is possible to recover the  missing samples 
$$f^{(j)}(nt_o) 
\hskip.5 truecm n\in \cI,\   1\le j\le L-1,  $$  
for any finite set   $  \cI\subset\Z$.  
\end{proposition}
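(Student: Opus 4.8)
The plan is to deduce the proposition from Corollary \ref{c:K} (if $L=2\ell-1$) or Corollary \ref{c:I} (if $L=2\ell$) by verifying their $\PI$-independence hypothesis for the explicit generators $\Phi_L$; the verification will combine the degree computation of Lemma \ref{l:derjfi} with Lemma \ref{l:dertre} and a short analytic-continuation step. First I would note that $(L-1)\pi/\omega<t_o<L\pi/\omega$ is equivalent to $2\omega/L<h<2\omega/(L-1)$, i.e.\ to the \emph{open} version of the hypotheses of Theorems \ref{maingen}/\ref{maingendue} (and, since $L>1$, to $\ell\neq1$ in the odd case); hence by Theorem \ref{t:derframes} the family $\EfiL$ is a frame but not a Riesz basis, the interval $K$ (resp.\ $I_-\cup I_+$) is non-empty, and Corollary \ref{c:K} (resp.\ \ref{c:I}) is applicable. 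I would carry out the odd case $L=2\ell-1$; the even case is identical, with $K$ replaced by $I_-\cup I_+$ (on each of whose two subintervals $W$ is again a vector of the minors described below) and Corollary \ref{c:K} replaced by Corollary \ref{c:I}.

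Next I would identify $W$ for $\Phi_L$. By the column formula (\ref{colonne:1}) together with (\ref{Deriv:frameL}), on $K$ the matrix $h^{-1/2}\Jbfi$ obtained by deleting the vanishing row of the pre-Gramian is a matrix of the form $M_{2\ell-2}$ of (\ref{t:Mmatrix}), up to a translation of the variable --- this is precisely the computation already performed in the proof of Theorem \ref{t:derframes}. Since the $k$-th column of $M_{2\ell-2}$ carries the power $(i\,\cdot\,)^{\,k-1}$, the $m$-th component $W_m$ of the cross product of its rows is, up to sign, the $(2\ell-2)\times(2\ell-2)$ minor obtained by suppressing that column, that is $\pm\det M_{2\ell-2,\,m-1}$. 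By Lemma \ref{l:derjfi} (whose proof, via the recursion (\ref{l:DerMmatrix}), also shows the leading coefficient is non-zero) this minor is a non-null polynomial of degree $L-m$. Thus $W_1,\dots,W_L$ are non-null polynomials of pairwise distinct degrees $L-1,L-2,\dots,0$.

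Recovering $f^{(j)}(nt_o)$ for $1\le j\le L-1$ is the same as recovering the samples of the generators $\varphi_2,\dots,\varphi_L$, so after renaming these as the first $L-1$ generators (the convention stated before Corollary \ref{c:K}) --- which only permutes the components of $W$ up to sign --- the $L-1$ components of $W$ relevant for Corollary \ref{c:K} with $\ng=L-1$ are $L-1$ of the polynomials $W_m$ above: non-null polynomials of pairwise distinct degrees, which I reorder to have strictly decreasing degrees. To check that they are $\PI$-independent on $K$, I would suppose that $\sum_j p_j(t)W_j(t)=0$ for a.e.\ $t\in K$ with $p_j\in\PI$; since each $p_j$ is a finite exponential sum and each $W_j$ is a polynomial, $t\mapsto\sum_j p_j(t)W_j(t)$ extends to an entire function, and, vanishing on the interval $K$, it vanishes identically on $\R$. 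As the $p_j$ are continuous and $h$-periodic, Lemma \ref{l:dertre} applied to the non-null polynomials $W_j$ arranged in strictly decreasing order of degree gives $p_j\equiv0$ for every $j$. Hence those components of $W$ are $\PI$-independent on $K$, and Corollary \ref{c:K} yields the conclusion.

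The step I expect to need the most care is the passage from the identity on $K$ to the identity on all of $\R$: one must observe that, even though the pre-Gramian (hence $W$) is the relevant object only on $K$, its entries are genuine polynomials in the variable, so $\sum_j p_j W_j$ is entire and the identity theorem applies; everything else is the bookkeeping of Corollary \ref{c:K} (resp.\ \ref{c:I}) and an appeal to Lemma \ref{l:dertre}. I would add, finally, that since \emph{all} $L$ components of $W$ have pairwise distinct degrees, the same argument proves that $W$ itself is $\PI$-independent on $K$ (resp.\ on $I_-\cup I_+$), so Theorem \ref{t:recontructK} (resp.\ \ref{t:recontructI}) in fact allows the recovery of the full set $f^{(j)}(nt_o)$, $0\le j\le L-1$, $n\in\cI$.
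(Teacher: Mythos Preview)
Your proposal is correct and follows essentially the same route as the paper: identify the components of $W$ as non-null polynomials of pairwise distinct degrees via Lemma~\ref{l:derjfi} and (\ref{t:Jmatrix}), extend a vanishing relation from $K$ to $\R$ by analytic continuation, and conclude with Lemma~\ref{l:dertre}. The only cosmetic difference is that the paper invokes Theorem~\ref{t:recontructK} directly (thus proving recovery of all $L$ channels at once), whereas you first match the statement's literal range $1\le j\le L-1$ through Corollary~\ref{c:K} with $\lambda=L-1$ and then observe the full result as an addendum; your reading of the index range is more careful, but the mathematical content is the same.
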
  
   \begin{proof}
  We shall only prove the theorem when  $L$ is odd. The proof for $L$ even is similar. Let $L=2\ell-1;$ then $ {\omega}/(\ell- {1}/{2})\le h< {\omega}/(\ell-1)$, by the assumption   on $t_o$.  
   We recall from Section \ref{s: Recovery} that $W$ denotes the cross product  of the rows of $ {h}^{-\frac{1}{2}} \Jbfi$; thus, up to a sign,  its    components $W_k$  are  the minors  of order $2\ell-2$ of $ {h}^{-\frac{1}{2}} \Jbfi$. 
  By Theorem  \ref{t:recontructK}   it suffices to prove that the components of the vector $W$   are $\PI$-independent on $K$ for any finite set   $ \cI.$ 
By (\ref{t:Jmatrix})  and Lemma \ref{l:derjfi},  $W_k$ is a polynomial of degree $ 2\ell-1-k$      for $ k=1,2,\dots 2\ell-1.$
Thus 
 $$\partial W_{ 1} >  \partial W_{2} >\ldots \ldots  >\partial W_{2\ell-1}.$$
Let now $ {p}_1,p_2,\ldots,p_{2\ell-1}$ be trigonometric polynomials in $\PI$ such that 
$$\sum_{j=1}^{2\ell-1}\, p_j(t)\, W_j(t)=0,\qquad t\in K.$$
By analytic continuation, this identity holds on $\R.$ Thus, by Lemma  \ref{l:dertre}, $p_j=0$ for all $j=1,\ldots,2\ell-1$. This concludes the proof of  the proposition.   \end{proof}  
  \section{The  first order formula: explicit duals and numerical experiments }
\label{s:firstorder}
 Obviously  in applications,   to  use  the recontruction formula (\ref{DSformulaL}),    it is necessary to find     the dual generators   $  {\varphi_1}^*,\dots, {\varphi_L^*}$. 
When $L=2, 3$ we gave an explicit expression of the  Fourier transforms of the duals  in terms of the Fourier transforms of the generators   in(see Corollaries 5.2 and 5.3 in   \cite{DP}). For greater   values of $L$ one can  use the general formulas given in Theorems 4.6 and 4.7 in \cite{DP}. In this section we provide   an explicit espression of the duals for the first-order derivative formula ($L=2$) (see (\ref{dsamp:formdue})) and  
  we    present  a numerical experiment  of  recovery of  missing samples   and   reconstruction of a signal.  
  \par 
 Let    $ {\varphi_1}, {\varphi_2}$ be the functions defined in   (\ref{Deriv:frameL})  for $L=2:$ 
 \begin{equation}\nonumber%\label{Deriv:frame} 
 \what{\varphi_1}=\chi_{[-\omega,\omega]}  \hskip 1truecm   \what{\varphi_2}=i x   \chi_{[-\omega,\omega]} . 
\end{equation}
  By Theorem   \ref{t:derframes},     $\Efi$ is a   frame for $\BL$ if $\omega\le h<2\omega$; if $h=\omega$ then it is a Riesz basis for $\BL.$  The sampling formula (\ref{DSformulaL}) reduces to    \begin{equation}\label{dsamp:expdue}
f=\sqrt{2\pi}\sum_{k\in \Z}   f(kt_o)\tau_{-kt_o}\varphi_1^*- f'(kt_o) \tau_{-kt_o}\varphi_2^* .
\end{equation}
The Fourier transforms of the dual generators  are  \begin{equation} \nonumber %\label{Der:gendualidue}  
\what{\varphi_1^*}(x)=
\begin{cases}
\frac{1}{h}(1-  \frac{|x|}{h})  &\hskip-.1truecm  h- \omega<|x|< \omega\\
   \frac{1}{h (1+x^2)} &\hskip-.1truecm|x|<h- \omega  \\
\end{cases}
\hskip 0.4truecm \what{\varphi_2^*}(x)=
\begin{cases}
\frac{i}{h^2} \,{\rm sign}(x)  &\hskip-.1truecm h- \omega<|x|< \omega\\
  \frac{ix}{h (1+x^2)}  &\hskip-.1truecm |x|<h- \omega  \\
  \end{cases}
\end{equation}  (see  Corollary 5.2 in \cite{DP}).   Figure 1   shows $\what{\varphi_1 ^*},\ \what{\varphi_2 ^*}$  for  $\  h=1.6\,   \omega$.\
 \begin{figure}[h]
 \begin{center}
\centering\setlength{\captionmargin}{0pt}
  \includegraphics[width=12.5cm,height=2.8cm]{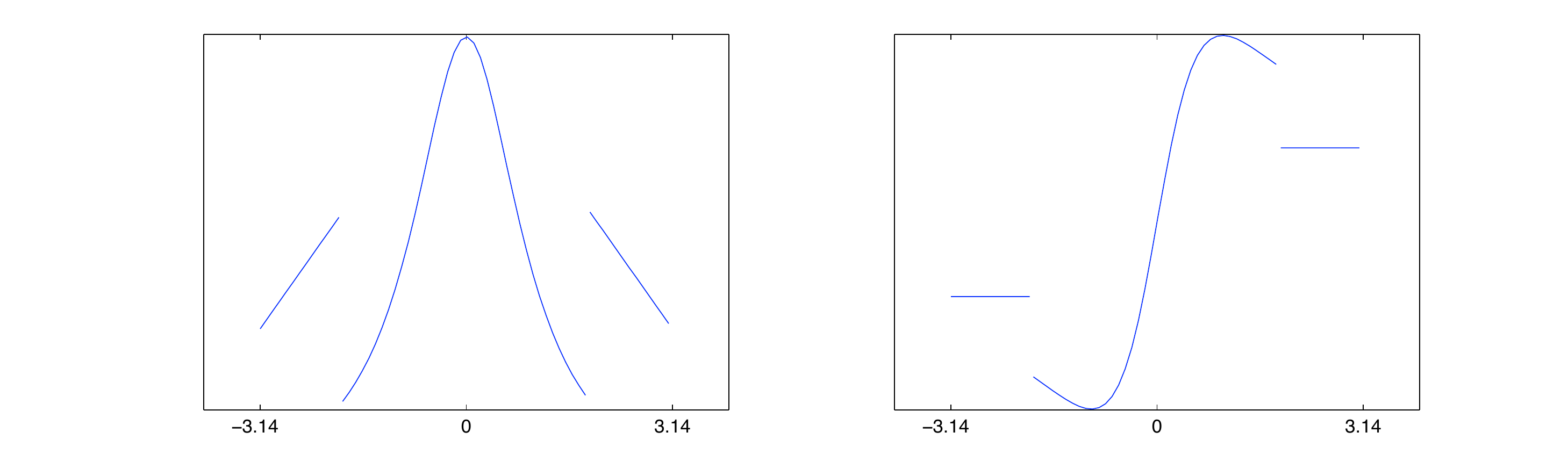}
 \vskip-.2truecm
 \caption{The Fourier transforms of the duals  for $h=1.6\ 
  \pi$.}  \end{center}
\end{figure}  
 \par \noindent
 By  Fourier inversion
  \begin{equation}\label{dsamp:formdue}
 \begin{aligned}
\varphi_1^*(x)=& \frac{1}{\sqrt{2\pi}}\Big[\,  \frac{2\omega}{h} H \big(\snc(\omega x) -\snc(Hx)\big)+\frac{2} {h^2x^2}\big(\cos(Hx) -\cos(\omega x)\big)\\
&+\frac{1}{h} \int_{|t|<H}\, \frac{1}{1+t^2} e^{itx}\,dt\,\Big]
  \\
{\ }  
\\
\varphi_2^*(x)=&\frac{1}{\sqrt{2\pi}}\Big[\, \frac{2}{h^2}\big(\cos(\omega x) -\cos(Hx)\big)x^{-1} +\frac{i}{h}\int_{0<|t|<H} \frac{t}{1+t^2} e^{itx}\, dt \Big]
\end{aligned}\end{equation}
where $H=h-\omega$ and $\snc={\sin(x)}/{x}$. 
.  The integrals in the right hand side of this formula  can be written as linear combinations of sine  and cosine integrals, so that  it is possible to implement  the duals  without any approximation. 
Observe  that if $h=\omega$ then
$$
\varphi_1^*(x)=\frac{1}{ \sqrt{2 \pi}} \snc^2\left(\frac{\omega x}{2}\right) \qquad \varphi_2^*(x)=- \frac{1}{ \sqrt{2 \pi}}\,  x\,  \snc^2\left(\frac{\omega x}{2}\right).
$$ 
  Following Ferreira,    we have chosen  the    test function    \begin{equation}\label{ferreira}
\fo (x)=\snc(\pi(x-2.1))-  0.7\,  \snc(\pi(x+1.7)) 
\end{equation}
plotted in   Figure 2.
 \begin{figure}[h]
\begin{center}
\centering\setlength{\captionmargin}{0pt}%
\includegraphics[width=6.3cm,height=4cm]
{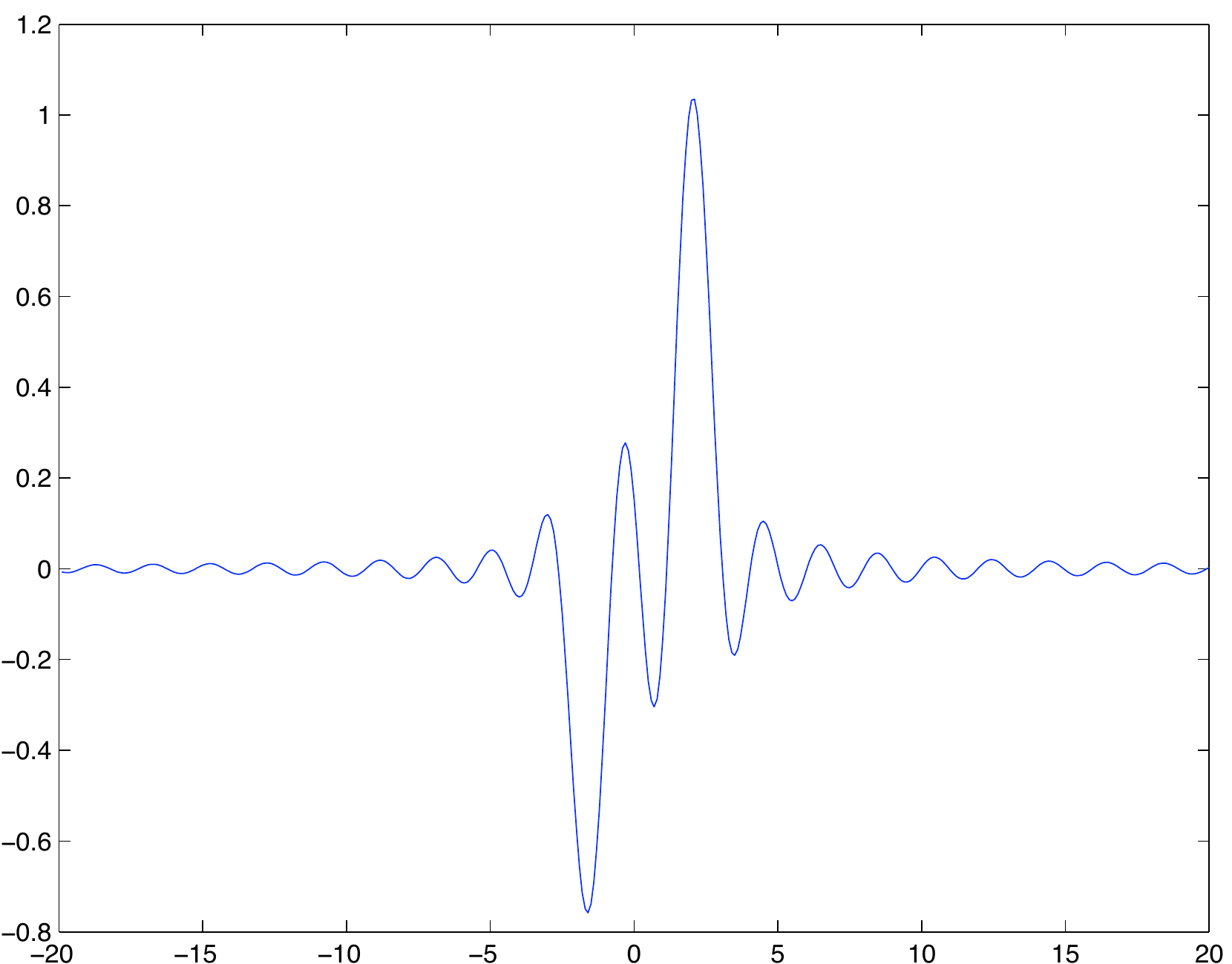} 
 \caption{The function $ \fo$ in $[-20,20].$}
  \end{center}
\end{figure}
  In general, redundancy  makes frame  expansions robust to errors like noise,  quantization and losses. However, for this  derivative frame, even the  reconstruction
  of a signal  where only two samples are   missing may  lead to  poor results. To illustrate  this point,  we  have    reconstructed    the  signal   $ \fo$ 
   in (\ref{ferreira}) 
by using   formula  
 (\ref{dsamp:expdue}), when  the samples $ \fo(4t_o)$ and  $\fo'(4t_o)$  are set to zero: 
 \begin{equation}\nonumber 
 \fo^*=\sqrt{2\pi}\sum_{k\not=4}   \fo(kt_o)\tau_{-kt_o}\varphi_1^*-  \fo'(kt_o) \tau_{-kt_o}\varphi_2^* .
\end{equation}
 Figure 3 shows the  functions $ \fo^*$ and  $ \fo$   in the interval $[-10,10]$. 
  \begin{figure}[h]
\begin{center}
\centering\setlength{\captionmargin}{0pt} 
\includegraphics[width=6.5cm,height=4cm]  {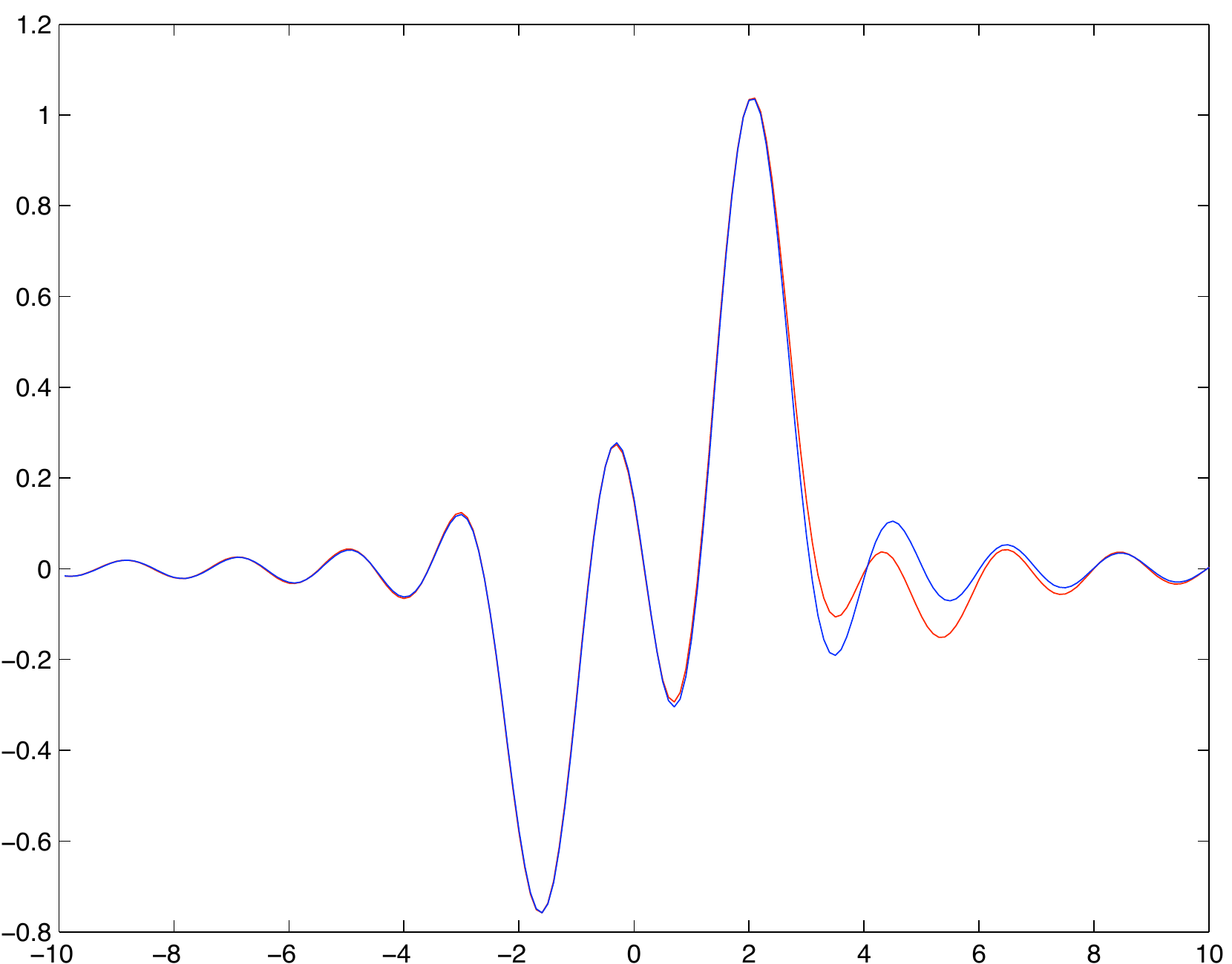}  \caption{The functions $ \fo^*$  and  $ \fo$  in $[-10,10]$.    } \end{center}
\end{figure}
\par \noindent  In our experiment we assume that    $10$  samples of  $\fo$ and $\fo'$ are unknown and  we recover them  by solving the $20\times 20$  system (\ref{t:matrixeq}).  Then we obtain the reconstructed function  $\fo^c$  via   formula (\ref{dsamp:expdue}), where  the missing 
   samples are replaced by the  computed ones.  
We suppose  that the missing samples are 
$ \{\fo(\ell_k t_o), \fo'(\ell_k t_o),$\penalty-10000 $  k=1,2,\dots,10\}$, where  $ l_k=-16+3(k-1)$.    We  have obtained  the   computed samples with   an   error    of  order $10^{-4}$ and  a   relative error  of   order $10^{-2} $; the    graph  of the reconstructed function $\fo^c$ is undistinguishable from that of  $\fo$.
     
   \end{document}